\newtheorem{theorem}{Theorem}
\newtheorem{lemma}[theorem]{Lemma}
\newcommand{\der}[1]{\operatorname{\mathrm{Der}}{#1}}
\newcommand{\orb}{\mathrm{Orb}}
\newcommand{\D}[2]{\mathbf{D}^{#1}_{#2}}
\newcommand{\A}{\mathbf{A}}
\newcommand{\B}{\mathbf{B}}
\newcommand{\C}{\mathbf{C}}
\newcommand{\cd}[2]{\mathfrak{CD}^{#1}_{#2}}
\newcommand{\lb}{\lambda}
\newcommand{\af}{\alpha}
\newcommand{\bt}{\beta}
\newcommand{\gm}{\gamma}
\begin{document}

{\Large\noindent 
The geometric  classification of nilpotent $\mathfrak{CD}$-algebras
\footnote{ The authors thank  Prof. Dr. Pasha Zusmanovich for   discussions about  $\mathfrak{CD}$-algebras.}
 
}

\ 

   {\bf Ivan Kaygorodov$^{a}$ \&   Mykola Khrypchenko$^{b}$ \\

    \medskip
}

{\tiny

$^{a}$ CMCC, Universidade Federal do ABC, Santo Andr\'e, Brazil

$^{b}$ Departamento de Matem\'atica, Universidade Federal de Santa Catarina, Florian\'opolis, Brazil

\

\smallskip

   E-mail addresses:

    \smallskip

    Ivan Kaygorodov (kaygorodov.ivan@gmail.com)
    
    Mykola Khrypchenko (nskhripchenko@gmail.com)

}

\

\noindent{\bf Abstract}: 
{\it We give a geometric  classification of complex
$4$-dimensional nilpotent $\mathfrak{CD}$-algebras. 
 The corresponding geometric variety has dimension 18 and decomposes into $2$ irreducible components determined by the Zariski closures of a two-parameter family of algebras and a four-parameter family of algebras (see Theorem~\ref{main-geo}). 
 In particular, there are no rigid $4$-dimensional complex nilpotent $\mathfrak{CD}$-algebras.}

\

\noindent {\bf Keywords}: {\it Nilpotent algebra, Lie algebra, Jordan algebra,
$\mathfrak{CD}$-algebra, geometric classification, degeneration.}

\ 

\noindent {\bf MSC2010}: 	17A30, 17D99, 17B30.

\section*{Introduction}

There are many results related to the algebraic and geometric 
classification
of low-dimensional algebras in the varieties of Jordan, Lie, Leibniz and 
Zinbiel algebras;
for algebraic classifications  see, for example, 
\cite{ack, demir,  lisa,   ikm19,  kkk18,cfk20, kpv19, kv16}; 
for geometric classifications and descriptions of degenerations see, for example, 
\cite{ack, bb14, BC99, lisa,gkk2, GRH, GRH2, cfk20, g93, jkk19, ikm19, ikv17, ikv18,wolf2, kh15,  kkk18, kkl20, klp19, kpv19, kppv, kppvcor, kpv, kv16, kv17, kv19, S90,wolf1}.
In the present paper, we give algebraic classification of nilpotent $\mathfrak{CD}$-algebras.
This is a new class of non-associative algebras introduced in \cite{ack}.
The idea of the definition of a $\mathfrak{CD}$-algebra  comes from the following property of Jordan and Lie algebras: {\it the commutator of any pair of multiplication operators is a derivation}.
This gives three identities of degree four,    which reduce to only one identity of degree four in the commutative or anticommutative case.
Commutative and anticommutative  $\mathfrak{CD}$-algebras are related to many interesting varieties of algebras.
Thus, anticommutative  $\mathfrak{CD}$-algebras is a generalization of Lie algebras, 
containing the intersection of Malcev and Sagle algebras as a proper subvariety. 
Moreover, the following intersections of varieties coincide:
Malcev and Sagle algebras; 
Malcev and anticommutative  $\mathfrak{CD}$-algebras; 
Sagle and anticommutative  $\mathfrak{CD}$-algebras.
On the other hand, 
the variety of anticommutative  $\mathfrak{CD}$-algebras is a proper subvariety of 
the varieties of binary Lie algebras 
and almost Lie algebras \cite{kz}.
The variety of anticommutative  $\mathfrak{CD}$-algebras  coincides with the intersection of the varieties of binary Lie algebras and almost Lie algebras.
Commutative  $\mathfrak{CD}$-algebras is a generalization of Jordan algebras, 
which is a generalization of associative commutative algebras.
On the other hand, the variety of commutative  $\mathfrak{CD}$-algebras is  the variety of almost-Jordan algebras (sometimes,  called Lie triple algebras)  \cite{peresi,osborn69,Sidorov_1981} 
and the bigger variety of generalized almost-Jordan algebras \cite{arenas,hl,labra}.
The $n$-ary  version of commutative  $\mathfrak{CD}$-algebras was introduced in a recent paper by 
Kaygorodov, Pozhidaev and Saraiva \cite{kps19}. Commutative  $\mathfrak{CD}$-algebras are also related to assosymmetric algebras  \cite{askar18}.

\paragraph{\bf Motivation and contextualization} 
Given algebras ${\bf A}$ and ${\bf B}$ in the same variety, we write ${\bf A}\to {\bf B}$ and say that ${\bf A}$ {\it degenerates} to ${\bf B}$, or that ${\bf A}$ is a {\it deformation} of ${\bf B}$, if ${\bf B}$ is in the Zariski closure of the orbit of ${\bf A}$ (under the base-change action of the general linear group). The study of degenerations of algebras is very rich and closely related to deformation theory, in the sense of Gerstenhaber \cite{ger63}. It offers an insightful geometric perspective on the subject and has been the object of a lot of research.
In particular, there are many results concerning degenerations of algebras of small dimensions in a  variety defined by a set of identities.
One of the main problems of the {\it geometric classification} of a variety of algebras is a description of its irreducible components. In the case of finitely-many orbits (i.e., isomorphism classes), the irreducible components are determined by the rigid algebras --- algebras whose orbit closure is an irreducible component of the variety under consideration. 
The algebraic classification of complex $4$-dimensional nilpotent $\mathfrak{CD}$-algebras was obtained in \cite{kk20}, and in the present paper we continue the study of the variety by giving its geometric classification.

\paragraph{\bf Main result}
The main result of the paper is Theorem~\ref{main-geo}, in which we prove that the variety of complex $4$-dimensional nilpotent $\mathfrak{CD}$-algebras has $2$ irreducible components of dimensions $15$ and $18.$

%\newpage
\section{Degenerations of algebras}
Given an $n$-dimensional vector space ${\bf V}$, the set ${\rm Hom}({\bf V} \otimes {\bf V},{\bf V}) \cong {\bf V}^* \otimes {\bf V}^* \otimes {\bf V}$ 
is a vector space of dimension $n^3$. This space inherits the structure of the affine variety $\mathbb{C}^{n^3}.$ 
Indeed, let us fix a basis $e_1,\dots,e_n$ of ${\bf V}$. Then any $\mu\in {\rm Hom}({\bf V} \otimes {\bf V},{\bf V})$ is determined by $n^3$ structure constants $c_{i,j}^k\in\mathbb{C}$ such that
$\mu(e_i\otimes e_j)=\sum_{k=1}^nc_{i,j}^ke_k$. A subset of ${\rm Hom}({\bf V} \otimes {\bf V},{\bf V})$ is {\it Zariski-closed} if it can be defined by a set of polynomial equations in the variables $c_{i,j}^k$ ($1\le i,j,k\le n$).

The general linear group ${\rm GL}({\bf V})$ acts by conjugation on the variety ${\rm Hom}({\bf V} \otimes {\bf V},{\bf V})$ of all algebra structures on ${\bf V}$:
$$ (g * \mu )(x\otimes y) = g\mu(g^{-1}x\otimes g^{-1}y),$$ 
for $x,y\in {\bf V}$, $\mu\in {\rm Hom}({\bf V} \otimes {\bf V},{\bf V})$ and $g\in {\rm GL}({\bf V})$. Clearly, the ${\rm GL}({\bf V})$-orbits correspond to the isomorphism classes of algebras structures on ${\bf V}$. Let $T$ be a set of polynomial identities which is invariant under isomorphism. Then the subset $\mathbb{L}(T)\subset {\rm Hom}({\bf V} \otimes {\bf V},{\bf V})$ of the algebra structures on ${\bf V}$ which satisfy the identities in $T$ is ${\rm GL}({\bf V})$-invariant and Zariski-closed. It follows that $\mathbb{L}(T)$ decomposes into ${\rm GL}({\bf V})$-orbits. The ${\rm GL}({\bf V})$-orbit of $\mu\in\mathbb{L}(T)$ is denoted by $O(\mu)$ and its Zariski closure by $\overline{O(\mu)}$.

Let ${\bf A}$ and ${\bf B}$ be two $n$-dimensional algebras satisfying the identities from $T$ and $\mu,\lambda \in \mathbb{L}(T)$ represent ${\bf A}$ and ${\bf B}$ respectively.
We say that ${\bf A}$ {\it degenerates} to ${\bf B}$ and write ${\bf A}\to {\bf B}$ if $\lambda\in\overline{O(\mu)}$.
Note that in this case we have $\overline{O(\lambda)}\subset\overline{O(\mu)}$. Hence, the definition of a degeneration does not depend on the choice of $\mu$ and $\lambda$. If ${\bf A}\to {\bf B}$ and ${\bf A}\not\cong {\bf B}$, then ${\bf A}\to {\bf B}$ is called a {\it proper degeneration}. We write ${\bf A}\not\to {\bf B}$ if $\lambda\not\in\overline{O(\mu)}$ and call this a {\it non-degeneration}. Observe that the dimension of the subvariety $\overline{O(\mu)}$ equals $n^2-\dim\der({\bf A})$. Thus if ${\bf A}\to {\bf B}$ is a proper degeneration, then we must have $\dim\der({\bf A})>\dim\der({\bf B})$.

Let ${\bf A}$ be represented by $\mu\in\mathbb{L}(T)$. Then  ${\bf A}$ is  {\it rigid} in $\mathbb{L}(T)$ if $O(\mu)$ is an open subset of $\mathbb{L}(T)$.
Recall that a subset of a variety is called {\it irreducible} if it cannot be represented as a union of two non-trivial closed subsets. A maximal irreducible closed subset of a variety is called an {\it irreducible component}.
It is well known that any affine variety can be represented as a finite union of its irreducible components in a unique way.
The algebra ${\bf A}$ is rigid in $\mathbb{L}(T)$ if and only if $\overline{O(\mu)}$ is an irreducible component of $\mathbb{L}(T)$.

In the present work we use the methods applied to Lie algebras in \cite{GRH,GRH2}.
To prove %primary 
degenerations, we will construct families of matrices parametrized by $t$. Namely, let ${\bf A}$ and ${\bf B}$ be two algebras represented by the structures $\mu$ and $\lambda$ from $\mathbb{L}(T)$, respectively. Let $e_1,\dots, e_n$ be a basis of ${\bf V}$ and $c_{i,j}^k$ ($1\le i,j,k\le n$) be the structure constants of $\lambda$ in this basis. If there exist $a_i^j(t)\in\mathbb{C}$ ($1\le i,j\le n$, $t\in\mathbb{C}^*$) such that the elements $E_i^t=\sum_{j=1}^na_i^j(t)e_j$ ($1\le i\le n$) form a basis of ${\bf V}$ for any $t\in\mathbb{C}^*$, and the structure constants $c_{i,j}^k(t)$ of $\mu$ in the basis $E_1^t,\dots, E_n^t$ satisfy $\lim\limits_{t\to 0}c_{i,j}^k(t)=c_{i,j}^k$, then ${\bf A}\to {\bf B}$. In this case  $E_1^t,\dots, E_n^t$ is called a {\it parametric basis} for ${\bf A}\to {\bf B}$.

To prove a non-degeneration ${\bf A}\not\to {\bf B}$ we will use the following lemma (see \cite{GRH}).

\begin{lemma}\label{main}
Let $\mathcal{B}$ be a Borel subgroup of ${\rm GL}({\bf V})$ and $\mathcal{R}\subset \mathbb{L}(T)$ be a $\mathcal{B}$-stable closed subset.
If ${\bf A} \to {\bf B}$ and ${\bf A}$ can be represented by $\mu\in\mathcal{R}$ then there is $\lambda\in \mathcal{R}$ that represents ${\bf B}$.
\end{lemma}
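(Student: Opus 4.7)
The plan is to exploit the completeness of the flag variety $\mathrm{GL}({\bf V})/\mathcal{B}$. By hypothesis, $\mathbf{A}$ is represented by some $\mu\in\mathcal{R}$, and since $\mathbf{A}\to\mathbf{B}$ there exists some $\lambda\in\mathbb{L}(T)$ representing $\mathbf{B}$ with $\lambda\in\overline{O(\mu)}$. The goal is to produce $\lambda'\in\mathcal{R}$ in the same $\mathrm{GL}({\bf V})$-orbit as $\lambda$; such a $\lambda'$ automatically represents $\mathbf{B}$.

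The key step is to show that the saturation
\[
\mathrm{GL}({\bf V})\cdot\mathcal{R}=\{g*\nu : g\in\mathrm{GL}({\bf V}),\ \nu\in\mathcal{R}\}
\]
is closed in $\mathbb{L}(T)$. Because $\mathcal{R}$ is $\mathcal{B}$-stable, the action map $\mathrm{GL}({\bf V})\times\mathcal{R}\to\mathbb{L}(T)$, $(g,\nu)\mapsto g*\nu$, descends to a well-defined morphism
\[
\varphi\colon \mathrm{GL}({\bf V})/\mathcal{B}\times\mathcal{R}\longrightarrow\mathbb{L}(T),
\]
whose image is exactly $\mathrm{GL}({\bf V})\cdot\mathcal{R}$. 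Since $\mathcal{B}$ is a Borel subgroup, $\mathrm{GL}({\bf V})/\mathcal{B}$ is a complete (projective) variety, so the projection $\mathrm{GL}({\bf V})/\mathcal{B}\times\mathcal{R}\to\mathcal{R}$ is closed, and hence $\varphi$ is a closed morphism. As $\mathcal{R}$ is Zariski-closed in $\mathbb{L}(T)$, it follows that $\mathrm{GL}({\bf V})\cdot\mathcal{R}$ is closed in $\mathbb{L}(T)$.

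To finish, I observe that $\mu\in\mathcal{R}$ implies $O(\mu)\subset\mathrm{GL}({\bf V})\cdot\mathcal{R}$, and since the latter is closed we conclude $\overline{O(\mu)}\subset\mathrm{GL}({\bf V})\cdot\mathcal{R}$. Thus $\lambda\in\mathrm{GL}({\bf V})\cdot\mathcal{R}$, meaning $\lambda=g*\lambda'$ for some $g\in\mathrm{GL}({\bf V})$ and some $\lambda'\in\mathcal{R}$. This $\lambda'$ is isomorphic to $\lambda$, so it represents $\mathbf{B}$, as required.

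The main obstacle is the closedness of $\mathrm{GL}({\bf V})\cdot\mathcal{R}$; once this is secured via completeness of $\mathrm{GL}({\bf V})/\mathcal{B}$, the rest of the argument is essentially formal. In practice, one appeals to the standard fact from algebraic geometry that a projection from a product with a complete variety is a closed map, which is precisely where the Borel hypothesis is used in an essential way (a general closed subgroup would not suffice).
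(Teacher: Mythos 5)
The paper itself does not prove this lemma: it is quoted from Grunewald--O'Halloran \cite{GRH}, and your overall strategy --- show that the saturation $\mathrm{GL}({\bf V})\cdot\mathcal{R}$ is Zariski-closed via completeness of the flag variety, then conclude from $\overline{O(\mu)}\subseteq \mathrm{GL}({\bf V})\cdot\mathcal{R}$ --- is exactly the standard one from that reference. However, your key step does not work as written. The action map $(g,\nu)\mapsto g*\nu$ does \emph{not} descend to the plain product $\mathrm{GL}({\bf V})/\mathcal{B}\times\mathcal{R}$: for $\varphi(g\mathcal{B},\nu)=g*\nu$ to be well defined you would need $(gb)*\nu=g*\nu$ for all $b\in\mathcal{B}$ and $\nu\in\mathcal{R}$, i.e.\ that $\mathcal{B}$ fixes every point of $\mathcal{R}$, which is far stronger than $\mathcal{B}$-stability of the set $\mathcal{R}$. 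Moreover, even granting such a $\varphi$, the inference ``the projection $\mathrm{GL}({\bf V})/\mathcal{B}\times\mathcal{R}\to\mathcal{R}$ is closed, hence $\varphi$ is closed'' is a non sequitur: a morphism out of a product of a complete variety with an affine one need not be closed (take the complete factor to be a point and $\varphi$ a non-closed map on $\mathcal{R}$). So the closedness of $\mathrm{GL}({\bf V})\cdot\mathcal{R}$, which you correctly identify as the crux, is not actually established.

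Both defects are repaired simultaneously by the standard incidence-variety (associated bundle) construction, which is precisely where the $\mathcal{B}$-stability hypothesis enters. Consider
$Z=\{(g\mathcal{B},\xi)\in \mathrm{GL}({\bf V})/\mathcal{B}\times \mathbb{L}(T)\ :\ g^{-1}*\xi\in\mathcal{R}\}$.
This condition depends only on the coset $g\mathcal{B}$ \emph{because} $\mathcal{R}$ is $\mathcal{B}$-stable, and $Z$ is closed since its preimage in $\mathrm{GL}({\bf V})\times\mathbb{L}(T)$ is the preimage of the closed set $\mathcal{R}$ under the morphism $(g,\xi)\mapsto g^{-1}*\xi$. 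As $\mathrm{GL}({\bf V})/\mathcal{B}$ is complete, the projection $\mathrm{GL}({\bf V})/\mathcal{B}\times\mathbb{L}(T)\to\mathbb{L}(T)$ is a closed map, and the image of $Z$ under it is exactly $\mathrm{GL}({\bf V})\cdot\mathcal{R}$, which is therefore closed. From this point on, your concluding paragraph ($O(\mu)\subseteq\mathrm{GL}({\bf V})\cdot\mathcal{R}$, hence $\lambda=g*\lambda'$ with $\lambda'\in\mathcal{R}$ representing ${\bf B}$) goes through verbatim. In short: right idea and right use of the Borel hypothesis in spirit, but the proof as written has a genuine gap at its central step.
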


In particular, it follows from Lemma \ref{main} that ${\bf A}\not\to {\bf B}$, whenever $\dim({\bf A}^2)<\dim({\bf B}^2)$.
%Each time when we will need to prove some primary non-degeneration $\mu\not\to\lambda$, we will define $\mathcal{R}$ by a set of polynomial equations in structure constants $c_{ij}^k$ in such a way that the structure constants of $\mu$ in the basis $e_1,\dots,e_n$ satisfy these equations. We will omit everywhere the verification of the fact that $\mathcal{R}$ is stable under the action of the subgroup of lower triangular matrices and of the fact that $\lambda\not\in\mathcal{R}$ for any choice of a basis of ${\bf V}.$ 
%To simplify our equations, we will use the notation $A_i=\langle e_i,\dots,e_n\rangle,\ i=1,\ldots,n$ and write simply $A_pA_q\subset A_r$ instead of $c_{ij}^k=0$ ($i\geq p$, $j\geq q$, $k\geq r$).

When the number of orbits under the action of ${\rm GL}({\bf V})$ on  $\mathbb{L}(T)$ is finite, the graph of primary degenerations gives the whole picture. In particular, the description of rigid algebras and irreducible components can be easily obtained. Since the variety of $4$-dimensional nilpotent $\mathfrak{CD}$-algebras contains infinitely many non-isomorphic algebras, we have to fulfill some additional work. Let ${\bf A}(*):=\{{\bf A}(\alpha)\}_{\alpha\in I}$ be a family of algebras and ${\bf B}$ be another algebra. Suppose that, for $\alpha\in I$, ${\bf A}(\alpha)$ is represented by a structure $\mu(\alpha)\in\mathbb{L}(T)$ and ${\bf B}$ is represented by a structure $\lambda\in\mathbb{L}(T)$. Then by ${\bf A}(*)\to {\bf B}$ we mean $\lambda\in\overline{\cup\{O(\mu(\alpha))\}_{\alpha\in I}}$, and by ${\bf A}(*)\not\to {\bf B}$ we mean $\lambda\not\in\overline{\cup\{O(\mu(\alpha))\}_{\alpha\in I}}$.

Let ${\bf A}(*)$, ${\bf B}$, $\mu(\alpha)$ ($\alpha\in I$) and $\lambda$ be as above. To prove ${\bf A}(*)\to {\bf B}$, it is enough to construct a family of pairs $(f(t), g(t))$ parametrized by $t\in\mathbb{C}^*$, where $f(t)\in I$ and $g(t)=\left(a_i^j(t)\right)_{i,j}\in {\rm GL}({\bf V})$. Namely, let $e_1,\dots, e_n$ be a basis of ${\bf V}$ and $c_{i,j}^k$ ($1\le i,j,k\le n$) be the structure constants of $\lambda$ in this basis. If we construct $a_i^j:\mathbb{C}^*\to \mathbb{C}$ ($1\le i,j\le n$) and $f: \mathbb{C}^* \to I$ such that $E_i^t=\sum_{j=1}^na_i^j(t)e_j$ ($1\le i\le n$) form a basis of ${\bf V}$ for any  $t\in\mathbb{C}^*$, and the structure constants $c_{i,j}^k(t)$ of $\mu\big(f(t)\big)$ in the basis $E_1^t,\dots, E_n^t$ satisfy $\lim\limits_{t\to 0}c_{i,j}^k(t)=c_{i,j}^k$, then ${\bf A}(*)\to {\bf B}$. In this case, $E_1^t,\dots, E_n^t$ and $f(t)$ are called a {\it parametric basis} and a {\it parametric index} for ${\bf A}(*)\to {\bf B}$, respectively. In the construction of degenerations of this sort, we will write $\mu\big(f(t)\big)\to \lambda$, emphasizing that we are proving the assertion $\mu(*)\to\lambda$ using the parametric index $f(t)$.

%To prove an assertion of the form ${\bf A}(*)\not\to {\bf B}$, one can use the fact that if $\bf{C}\to \bf{A}(\alpha)$ for any $\alpha\in I$ and $C\not\to \bf{B}$, then ${\bf A}(*)\not\to {\bf B}$.

%Through a series of degenerations summarized in the table below by the corresponding parametric bases and indices, we obtain the main result of the paper.

\section{The algebraic classification of complex $4$-dimensional nilpotent $\mathfrak{CD}$-algebras}

The algebraic classification  of complex $4$-dimensional nilpotent $\mathfrak{CD}$-algebras was obtained in the following three steps.
\begin{enumerate}
    \item Trivial non-anticommutative $\mathfrak{CD}$-algebras were classified in 
    \cite[Theorem 2.1, Theorem 2.3  and Theorem 2.5]{demir}, and the only trivial anticommutative 
     $\mathfrak{CD}$-algebra is $\mathfrak{CD}_{03}^{4*}$ from~\cite{kk20};
    
    \item Non-trivial non-Leibniz terminal extensions of the algebra $\cd {3*}{04}(\lambda)$ were classified in \cite[1.4.5. 1-dimensional central extensions of ${\bf T}^{3*}_{04}$]{kkp19geo}. All of them are $\mathfrak{CD}$-algebras.
    
    \item Non-trivial Leibniz terminal extensions of $\cd {3*}{04}(\lambda)$ were found in \cite{kppvcor}. All of them are $\mathfrak{CD}$-algebras.
    
    \item The rest of algebras were found in \cite{kk20} and are listed below: 
    
\end{enumerate}

{\tiny
\begin{longtable}{lllll llll}
$\cd 4{01}$&$:$& $e_1 e_1 = e_2$ & $e_2 e_2=e_3$ \\
\hline
$\cd 4{02}$&$:$& $e_1 e_1 = e_2$ & $e_2 e_1= e_3$ & $e_2 e_2=e_3$ \\
\hline$\cd 4{03}$&$:$& $e_1 e_1 = e_2$ & $e_2 e_1=e_3$ \\
\hline$\cd 4{04}(\lambda)$&$:$& $ e_1 e_1 = e_2$ & $e_1 e_2=e_3$ & $e_2 e_1=\lambda e_3$ \\ \hline

$\cd 4{05}$ &: & $e_1 e_1 = e_2$ & $e_2 e_1=e_4$ & $e_2 e_2=e_3$ \\
\hline$\cd 4{06}$ &: & $ e_1 e_1 = e_2$ & $e_1 e_2=e_4$ & $e_2 e_1=e_3$  \\

\hline$\cd 4{07}(\lambda)$&: & $e_1 e_1 = e_2$ & $e_1 e_2=e_4$ & $e_2 e_1=\lambda e_4$ & $e_2 e_2=e_3$ \\\hline
$\cd 4{08}(\alpha)$ &$:$&
$e_1 e_1 = e_2$ & $e_1e_3=e_4$ & $e_2 e_1=e_3$ & $e_2e_2=\alpha e_4$ & $e_3e_1=-2e_4$ \\

\hline$\cd 4{09}$ &$:$& 
$e_1 e_1 = e_2$ & $e_1e_2=e_4$ & $e_1e_3=e_4$ \\
&& $e_2 e_1=e_3$ & $e_2e_2=- e_4$ & $e_3e_1=-2e_4$\\
\hline

$\cd 4{10}(\alpha)$ &$:$&
$e_1 e_1 = e_2$ & $e_1 e_2=e_3$ & $e_1e_3=-e_4$ \\
&& $e_2 e_1=  e_3 +e_4$ & $e_2e_2=\alpha e_4$ & $e_3e_1=- e_4$ \\

\hline$\cd 4{11}(\lambda)$ &$:$& 
$e_1 e_1 = e_2$ & $e_1 e_2=e_3$ & $e_1e_3=(\lambda-2)e_4$ \\
$\lambda \neq1$&& $e_2 e_1=  \lb e_3+e_4$  &
$e_2e_2=-(\lambda+1)^2 e_4$ &$e_3e_1= (1-2\lb) e_4$ \\

\hline$\cd 4{12}(\alpha, \lambda)$ &$:$& 
$e_1 e_1 = e_2$ & $e_1 e_2=e_3$ & $e_1e_3=(\lambda-2)e_4$ \\&& $e_2 e_1=\lambda e_3$ & $e_2e_2=\alpha e_4$ & $e_3e_1=(1-2\lambda) e_4.$ \\
\hline

$\cd {4}{13}(\af)$&$:$& 
$e_1 e_1 = e_2$ & $e_1e_2=e_4$ & $e_1e_3=e_4$& $e_2e_1=e_4$ \\
$\af\neq \frac{1}{2}$&&  $e_2e_3=\alpha e_4$& $e_3e_1=e_4$& \multicolumn{2}{l}{$e_3e_2=(\alpha +1)e_4$}\\

\hline
$\cd {4}{14}(\af, \beta)$&$:$& 
$e_1 e_1 = e_2$ & $e_1 e_2 = e_4$&  $e_1 e_3 = \af e_4$& $e_2 e_1 = e_4$\\ && 
$e_2 e_2 = e_4$& $e_3 e_1 = \af e_4$& $e_3 e_2 = e_4$& $e_3 e_3 =\beta  e_4$& \\

\hline
$\cd {4}{15}(\af)$&$:$& 
$e_1 e_1 = e_2$ &  $e_1 e_2 = \af e_4$&   $e_1 e_3 = e_4$&
 $e_2 e_1 =(\af+1)  e_4$ &  $e_3 e_1 = e_4$\\

\hline
$\cd {4}{16}$&$:$& 
$e_1 e_1 = e_2$ & $e_1e_2=e_4$ & $e_2 e_1 = e_4$\\ && $e_2 e_3 = -\frac{1}{2} e_4$& $e_3 e_2 =\frac{1}{2} e_4$& $e_3 e_3 = e_4$&\\

\hline
$\cd {4}{17}(\af)$&$:$& 
$e_1 e_1 = e_2$ &  $e_1 e_2 = e_4$&  $e_2 e_1 = e_4$&  $e_2 e_3 =\af e_4$&  $e_3 e_2 = (\af+1) e_4$&\\

\hline
$\cd {4}{18}(\af)$&$:$& 
$e_1 e_1 = e_2$ &  $e_1 e_2 =\af  e_4$&   \multicolumn{2}{l}{$e_2 e_1 =(\af+1) e_4$}&   $e_3 e_3 = e_4$&\\

\hline
$\cd {4}{19}$&$:$& 
$e_1 e_1 = e_2$ &   $e_1 e_2 = e_4$&   $e_2 e_1 = e_4$&    $e_3 e_1 = e_4$&
 $e_3 e_3 = e_4$&\\
 
\hline
$\cd {4}{20}$&$:$& 
$e_1 e_1 = e_2$ &    $e_1 e_2 = e_4$&   $e_2 e_1 = e_4$&   $e_3 e_1 = e_4$&\\

\hline
$\cd {4}{21}(\af)$&$:$& 
$e_1 e_1 = e_2$ & $e_1 e_3 =\af e_4$&   $e_2 e_1 = e_4$&  $e_2 e_2 = e_4$&\\
&&  $e_2 e_3 = e_4$&  $e_3 e_1 =\af e_4$&  $e_3 e_2 = e_4$&  $e_3 e_3 = e_4$\\

\hline
$\cd {4}{22}$&$:$& 
$e_1 e_1 = e_2$ &   $e_1 e_3 = e_4$&    $e_2 e_3 = -\frac{1}{2} e_4$\\&&
 $e_3 e_1= e_4$&   $e_3 e_2 =\frac{1}{2} e_4$&   $e_3 e_3 = e_4$&\\

\hline
$\cd {4}{23}(\af)$&$:$& 
  $e_1 e_1 = e_2$&   $e_1 e_3 = e_4$&   $e_2 e_3 = \af e_4$&
  $e_3 e_1 = e_4$&   $e_3 e_2 =(\af +1)e_4$&\\

\hline
$\cd {4}{24}(\af)$&$:$& 
  $e_1 e_1 = e_2$&   $e_1 e_3 = e_4$&    $e_2 e_2 = e_4$\\
  && $e_3 e_1 = e_4$&   $e_3 e_2 = e_4$& $e_3 e_3 =\af  e_4$&\\

\hline
$\cd {4}{25} $&$:$& 
  $e_1 e_1 = e_2$&  $e_1 e_3 = e_4$& $e_2 e_1 = e_4$& $e_3 e_1 = e_4$&    $e_2 e_2 = e_4$&\\

\hline
$\cd {4}{26}(\af)$&$:$& 
  $e_1 e_1 = e_2$&   $e_1 e_3 = \af e_4$&    $e_2 e_2 = e_4$& \multicolumn{2}{l}{$e_3 e_1 =(\af+1) e_4$}&\\

%\hline
%$\cd {4}{27}(\af)$&$:$& 
%  $e_1 e_1 = e_2$& $e_1 e_3 = \af e_4$& \multicolumn{2}{l}{$e_3 e_1 =(\af +1) e_4$}&\\
  
  \hline
$\cd {4}{27}$&$:$& 
  $e_1 e_1 = e_2$&   $e_2 e_1 = e_4$& $e_2 e_3 = e_4$&  $e_3 e_2 = e_4$&   $e_3 e_3 = e_4$&\\

  \hline
$\cd {4}{28}(\af)$&$:$& 
  $e_1 e_1 = e_2$&   $e_2 e_1 = e_4$&    $e_2 e_2 = e_4$\\
  $\af\ne 1$&& $e_2 e_3 =   e_4$&   $e_3 e_2 =  e_4$&   $e_3 e_3 =\af  e_4$&\\

  \hline
$\cd {4}{29}$&$:$& 
  $e_1 e_1 = e_2$&    $e_2 e_3 =-\frac{1}{2} e_4$&   $e_3 e_2 =\frac{1}{2} e_4$&   $e_3 e_3 = e_4$&\\

  \hline
$\cd {4}{30}$&$:$& 
  $e_1 e_1 = e_2$&   $e_2 e_1 = e_4$& $e_2 e_3 = e_4$&   $e_3 e_2 = e_4$&\\

  \hline
$\cd {4}{31}$&$:$& 
  $e_1 e_1 = e_2$&  $e_2 e_3 = e_4$&     $e_3 e_1 = e_4$&
$e_3 e_2 = e_4$&\\

  \hline
$\cd {4}{32}(\af)$&$:$& 
  $e_1 e_1 = e_2$&   $e_2 e_3 = \af e_4$&   \multicolumn{2}{l}{$e_3 e_2 = (\af+1) e_4$}& \\

  \hline
$\cd {4}{33}$&$:$& 
  $e_1 e_1 = e_2$&   $e_2 e_1 = e_4$&   $e_2 e_2 = e_4$& $e_3 e_3 = e_4$& \\

 \hline
$\cd {4}{34}$&$:$& 
  $e_1 e_1 = e_2$&    $e_2 e_2 = e_4$& $e_3 e_3 = e_4$& \\

 \hline
$\cd {4}{35}$&$:$& 
  $e_1 e_1 = e_2$&    $e_2 e_2 = e_4$&  $e_3 e_1 = e_4$& $e_3 e_3 = e_4$& \\

 \hline
$\cd {4}{36}(\af)$&$:$& 
  $e_1 e_1 = e_2$&     $e_2 e_2 = e_4$&   $e_3 e_2= e_4$& $e_3 e_3 =\af  e_4$&\\

 %\hline
%$\cd {4}{38}$&$:$& 
%  $e_1 e_1 = e_2$&    $e_3 e_1 = e_4$&    $e_3 e_3 = e_4$&\\

 \hline
$\cd {4}{37}$&$:$& 
  $e_1 e_1= e_2$&    $e_1 e_2= e_4$&   $e_2 e_1= e_4$&    $e_3 e_3= e_4$\\

%  \hline
%$\cd {4}{40}$&$:$& 
%  $e_1 e_1= e_2$&   $e_1 e_3= e_4$&   $e_3 e_1= e_4$&\\

  \hline
$\cd {4}{38}$&$:$& 
  $e_1 e_1= e_2$&   $e_2 e_3= e_4$&   $e_3 e_2= e_4$&\\

%\hline
%$\cd {4}{42}$&$:$& 
%  $e_1 e_1= e_2$&      $e_3 e_3= e_4$\\
  
\hline
$\cd 4{39}$ &:&  
$e_1 e_1 = e_3+e_4$ & $e_1e_2=\frac i2 e_4$ & $e_1e_3=e_4$ & $e_2e_1=\frac i2 e_4$\\
&& $ e_2 e_2=e_3$ & $e_2e_3=-2ie_4$ & $e_3e_1=2e_4$ & $e_3e_2=-ie_4$\\

\hline
$\cd 4{40}$ &:&  
$e_1 e_1 = e_3+e_4$ & $e_1e_2=\frac i2 e_4$ & $e_1e_3=-\frac 12e_4$ & $e_2e_1=\frac i2 e_4$\\
&& $ e_2 e_2=e_3$ & $e_2e_3=-\frac i2e_4$ & $e_3e_1=\frac 12e_4$ & $e_3e_2=\frac i2e_4$\\

\hline
$\cd 4{41}$ &:&  
$e_1 e_1 = e_3+e_4$ & $e_1e_2=-\frac i2 e_4$ & $e_1e_3=e_4$ & $e_2e_1=-\frac i2 e_4$\\
&& $ e_2 e_2=e_3$ & $e_2e_3=-2ie_4$ & $e_3e_1=2e_4$ & $e_3e_2=-ie_4$\\

\hline
 $\cd 4{42}$ &:&  
$e_1 e_1 = e_3+e_4$ & $e_1e_2=-\frac i2 e_4$ & $e_1e_3=-\frac 12e_4$ & $e_2e_1=-\frac i2 e_4$\\
&& $ e_2 e_2=e_3$ & $e_2e_3=-\frac i2e_4$ & $e_3e_1=\frac 12e_4$ & $e_3e_2=\frac i2e_4$\\

\hline
$\cd 4{43}(\af)$ &$:$&  
$e_1 e_1 = e_3 + e_4$     & $e_1 e_2 = \af e_4$          & $e_1 e_3 = -\frac 12 e_4$ \\
&&   $e_2 e_1 = \af e_4$      & $e_2 e_2 = e_3$           & $e_3 e_1 = \frac 12 e_4$\\

\hline
$\cd 4{44}(\af,\bt,\gm)$ &: &  
$e_1 e_1 = e_3 + \af e_4$   & $e_1 e_2 = \bt e_4$ & $e_2 e_1 = (\bt+\gm) e_4$ \\
&& $e_2 e_2 = e_3$ & $e_3 e_1 = e_4$             & $e_3 e_3 = e_4$\\

\hline
$\cd 4{45}$ &: &  
$e_1 e_1 = e_3 + 2i e_4$  & $e_1 e_2 = e_4$ & $e_2 e_1 = e_4$& $e_2 e_2 = e_3$\\
&&  $e_3 e_1 = e_4$           & $e_3 e_2 = i e_4$       & $e_3 e_3 = e_4$\\

\hline
$\cd 4{46}(\af)$ &: &
$e_1 e_1 = e_3 - 2i\af e_4$ & $e_1 e_2 = \af e_4$  & $e_2 e_1 = \af e_4$ & $e_2 e_2 = e_3$ \\
$\af\ne$ 0& & $e_3 e_1 = e_4$              & $e_3 e_2 = i e_4$       & $e_3 e_3 = e_4$\\

\hline
$\cd 4{47}(\af,\bt)$ &: & 
$e_1 e_1 = e_3 + e_4$  & $e_1 e_3 = \af e_4$  & $e_2 e_2 = e_3$\\ 
$\bt\ne 0$ && $e_2 e_3 = \bt e_4$  & $e_3 e_1 =(\af+1) e_4$        & $e_3 e_2 = \bt e_4$ \\

\hline		
$\cd 4{48}(\af)$ &: & 
 $e_1 e_1 = e_3 + \af e_4$ &  $e_2 e_1 = i\af e_4$ & $e_2 e_2 = e_3$  \\
$\af\ne$ 0&& $e_3 e_1 = e_4$            & $e_3 e_2 = i e_4$     & $e_3 e_3 = e_4$\\

\hline 
$\cd 4{49}(\af)$ &: & 
$e_1 e_1 = e_3 + \af e_4$ & $e_2 e_1 = -i\af e_4$ & $e_2 e_2 = e_3$\\
$\af\ne$ 0& & $e_3 e_1 = e_4$& $e_3 e_2 = i e_4$ & $e_3 e_3 = e_4$\\

\hline 
$\cd 4{50}(\af)$ &:    & 
$e_1 e_1 = e_3 + \af e_4$ & $e_2 e_1 = e_4$ & $e_2 e_2 = e_3$ & $e_3 e_3 = e_4$\\

\hline
$\cd 4{51}(\af)$ &: &
$e_1 e_1 = e_3 + \af e_4$ &   $e_2 e_2 = e_3$ & $e_3 e_1 = e_4$\\
&& $e_3 e_2 = i e_4$ & $e_3 e_3 = e_4$\\

\hline 
$\cd 4{52}$ &: &  
$e_1 e_1 = e_3 +  e_4$ &   $e_2 e_2 = e_3$ &   $e_3 e_3 = e_4$\\

\hline 
$  \cd 4{53}$ &: &  
$e_1 e_1 = e_3$ &  $e_1e_2=-\frac 12e_4$ & $e_1e_3=e_4$ & $e_2e_1=\frac 12e_4$\\
 && $e_2 e_2 = e_3$ & $e_2e_3=ie_4$ &  $e_3e_1=e_4$ & $e_3e_2=ie_4$ \\

\hline
 $\cd 4{54}(\af)$ &:&  
$e_1 e_1 = e_3$ & $e_1e_2=e_4$ & $e_1e_3=\af e_4$ & $e_2e_1=e_4$\\
&& $ e_2 e_2=e_3$ & $e_2e_3=-i(\af+1)e_4$ & $e_3e_1=(\af+1) e_4$ & $e_3e_2=-i\af e_4$\\

\hline
$\cd 4{55}(\af)$ &: &  
$e_1 e_1 = e_3$ & $e_1 e_2 = e_4$ & $e_1 e_3 = \af e_4$ \\
& & $e_2 e_1 = e_4$ & $e_2 e_2 = e_3$  & $e_3 e_1 = (\af+1) e_4$ \\
		
\hline
$\cd 4{56}$ &: &  
$e_1 e_1 = e_3$ & $e_1 e_2 = e_4$ & $e_2 e_1 = -e_4$ & $e_2 e_2 = e_3$ \\
& & $e_3 e_1 = e_4$ & $e_3 e_2 = i e_4$ & $e_3 e_3 = e_4$\\

\hline
$\cd 4{57}(\af,\bt)$ &: &  
$e_1 e_1 = e_3$ & $e_1 e_2 = \af e_4$ &   $e_2 e_1 = (\af+\bt)e_4$ & $e_2 e_2 = e_3$ \\
$\bt\not\in\{0,-2\af\}$& & $e_3 e_1 = e_4$          & $e_3 e_2 = i e_4$     & $e_3 e_3 = e_4$\\

\hline
$\cd 4{58}$ &: &  
$e_1 e_1 = e_3$ & $e_1 e_3 = e_4$  & $e_2 e_1 = e_4$ & $e_2 e_2 = e_3$\\
&& $e_2 e_3 = i e_4$  & $e_3 e_1 = e_4$ & $e_3 e_2 = i e_4$ \\

\hline 
$\cd 4{59}(\af,\bt)$ &: &   
$e_1 e_1 = e_3$ & $e_1 e_3 = \af e_4$ & $e_2 e_2 = e_3$\\
 $\bt\ne 0$ && $e_2 e_3 = \bt e_4$ & $e_3 e_1 = (\af+1) e_4$  & $e_3 e_2 = \bt e_4$ \\

\hline		
$\cd 4{60}$ &: & 
$e_1 e_1 = e_3$ & $e_1 e_3 = i e_4$  & $e_2 e_2 = e_3$\\
&& $e_2 e_3 = e_4$ & $e_3 e_1 = (i+1) e_4$ & $e_3 e_2 = (i+1) e_4$  \\

\hline
$\cd 4{61}(\af)$ &: & 
 $e_1 e_1 = e_3$ & $e_1 e_3 = -i\af e_4$ & $e_2 e_2 = e_3$\\
 && $e_2 e_3 = \af e_4$ & $e_3 e_1 = (1-i\af) e_4$ & $e_3 e_2 = (\af+i) e_4$  \\

\hline
$\cd 4{62}$ &:&  
$e_1 e_1 = e_3$  & $e_1e_3=e_4$ & $e_2 e_2=e_3$\\
&& $e_2e_3=-2ie_4$ & $e_3e_1=2e_4$ & $e_3e_2=-ie_4$\\

\hline
$\cd 4{63}$ &:&  
$e_1 e_1 = e_3$ & $e_1e_3=-\frac 12e_4$ & $ e_2 e_2=e_3$\\
&& $e_2e_3=-\frac i2e_4$ & $e_3e_1=\frac 12e_4$ & $e_3e_2=\frac i2e_4$\\

\hline 
$\cd 4{64}(\af)$ &: & 
$e_1 e_1 = e_3$ & $e_1 e_3 = \af e_4$ & $e_2 e_2 = e_3$  & $e_3 e_1 = (\af+1) e_4$ 		\\

\hline
$\cd 4{65}(\af)$ &: &  
$e_1 e_1 = e_3$ &   $e_1 e_3 = \af e_4$ & $e_2 e_2 = e_3$ \\
$\af\ne 0$&&  $e_3 e_1 = (\af+1) e_4$  & $e_3 e_2 = i e_4$ \\
\hline
$\cd 4{66}$ &: &  
$e_1 e_1 = e_3$& $e_2 e_1 = e_4$ & $e_2 e_2 = e_3$ \\
&& $e_2 e_3 = e_4$ & $e_3 e_2 = e_4$ \\

\hline 
$\cd 4{67}$ &: & 
$e_1 e_1 = e_3$ &  $e_2 e_2 = e_3$ &  $e_3 e_3 = e_4$\\

\hline
$\cd 4{68}$ &: & 
$e_1 e_1 = e_3+e_4$ & $e_1 e_3 = i e_4$  &$e_2 e_2 = e_3$ &\\
&&$e_2 e_3 = e_4$&$e_3 e_1 =   ie_4$ & $e_3 e_2 = e_4$  \\

\hline
$\cd 4{69}$ &: & 
$e_1 e_1 = e_3$ & $e_1 e_3 =  ie_4$  &$e_2 e_2 = e_3$ &\\
&&$e_2 e_3 = e_4$&$e_3 e_1 =   ie_4$ & $e_3 e_2 = e_4$  \\

\hline
$\cd 4{70}$ &: & 
$e_1 e_1 = e_3$ & $e_1 e_3 =    e_4$  &$e_2 e_2 = e_3$ & $e_3 e_1 = e_4$ &   \\
\hline
		$\cd 4{71}$ &: & $e_1 e_1 = e_4$ & $e_1 e_2 = e_3$  & $e_1 e_3 = e_4$ & $e_2 e_1 = -e_3$  & $e_3 e_1 = -e_4$\\
		\hline
		$\cd 4{72}$ &: & $e_1 e_1 = e_4$ & $e_1 e_2 = e_3$ & $e_1 e_3 = e_4$\\
		&& $e_2 e_1 = -e_3$  & $e_2 e_2 = e_4$ & $e_3 e_1 = -e_4$\\
		\hline
		$\cd 4{73}$ &: & $e_1 e_2 = e_3 + e_4$ & $e_1 e_3 = e_4$ & $e_2 e_1 = -e_3$  & $e_3 e_1 = -e_4$\\
		\hline
		$\cd 4{74}(\af)$ &: & $e_1 e_2 = e_3$ & $e_1 e_3 = (\af+1)e_4$ & $e_2 e_1 = -e_3$ & $e_3 e_1 = -\af e_4$\\
		\hline
		$\cd 4{75}(\af)$ &: & $e_1 e_2 = e_3$ & $e_1 e_3 = (\af+1)e_4$ & $e_2 e_1 = -e_3$ & $e_2 e_2 = e_4$ & $e_3 e_1 = -\af e_4$\\        \hline
		$\cd 4{76}$ &: & $e_1 e_2 = e_3$ & $e_1 e_3 = e_4$ & $e_2 e_1 = -e_3$ & $e_2 e_2 = e_4$ & $e_3 e_1 = -e_4$\\        
		\hline
		$\cd 4{77}$ &: & $e_1 e_2 = e_3$ & $e_1 e_3 = e_4$ & $e_2 e_1 = -e_3$ & $e_2 e_3 = e_4$ & $e_3 e_2 = -e_4$\\
		\hline
		$\cd 4{78}$ &: & $e_1 e_2 = e_3$ & $e_1 e_3 = e_4$ & $e_2 e_1 = -e_3$\\
		&& $e_2 e_2 = e_4$ & $e_2 e_3 = e_4$ & $e_3 e_2 = -e_4$\\
		\hline
		$\cd 4{79}(\af)$ &: & $e_1 e_2 = e_3+\af e_4$ & $e_1 e_3 = e_4$ & $e_2 e_1 = -e_3$ & $e_2 e_3 = e_4$ & $e_3 e_3 = e_4$\\
		\hline
		$\cd 4{80}$ &: & $e_1 e_2 = e_3+e_4$ & $e_1 e_3 = e_4$ & $e_2 e_1 = -e_3$ & $e_3 e_3 = e_4$\\
		\hline
		$\cd 4{81}$ &: & $e_1 e_2 = e_3+e_4$ & $e_2 e_1 = -e_3$ & $e_3 e_3 = e_4$\\
		\hline
		$\cd 4{82}$ &: & $e_1 e_2 = e_3$ & $e_1 e_3 = e_4$ & $e_2 e_1 = -e_3$ & $e_2 e_2 = e_4$ & $e_3 e_3 = e_4$\\
		\hline
		$\cd 4{83}( \af\ne 0)$ &: & $e_1 e_2 = e_3$ & $e_2 e_1 = -e_3$ & $e_2 e_2 = \af e_4$ & $e_2 e_3 = e_4$ & $e_3 e_3 = e_4$\\
		\hline
		$\cd 4{84}$ &: & $e_1 e_2 = e_3$ & $e_2 e_1 = -e_3$ & $e_2 e_2 = e_4$ & $e_3 e_3 = e_4$\\
		\hline
		$\cd 4{85}$ &: & $e_1 e_2 = e_3$ & $e_2 e_1 = -e_3$ & $e_3 e_3 = e_4$\\
		\hline
		$\cd 4{86}$ &: & $e_1 e_2 = e_3$ & $e_2 e_1 = -e_3$ & $e_2 e_3 = e_4$  &  $e_3 e_2 = -e_4$\\
\hline		
		
$\cd {4}{87}(\lambda)$&$:$& 
$e_1 e_1 = \lambda e_3+(2 \Theta-1)e_4$& $e_1 e_2=e_4$ & $e_1e_3=e_4$& 
\multicolumn{2}{l}{$e_2 e_1=e_3-(1- \Theta)^2 \lb^{-1}e_4$}\\
$\lb \neq 0, \frac{1}{4}$&& $e_2 e_2=e_3$& $e_2e_3=\Theta\lb^{-1}e_4$ 
&$e_3e_3=e_4$\\

\hline$\cd {4}{88}(\lambda)$&$:$& 
$e_1 e_1 = \lambda e_3+(1-2 \Theta)e_4$& $e_1 e_2=e_4$ & $e_1e_3=e_4$& 
\multicolumn{2}{l}{$e_2 e_1=e_3- \Theta^2 \lb^{-1}e_4$}\\
$\lb \neq 0, \frac{1}{4}$&& $e_2 e_2=e_3$& $e_2e_3=\Theta\lb^{-1}e_4$ 
&$e_3e_3=e_4$\\

\hline$\cd {4}{89}(\lambda)$&$:$& 
$e_1 e_1 = \lambda e_3+(2 \Theta-1)e_4$& $e_1 e_2=e_4$ & $e_1e_3=e_4$& 
\multicolumn{2}{l}{$e_2 e_1=e_3-(1- \Theta)^2 \lb^{-1}e_4$}\\
$\lb \neq 0, \frac{1}{4}$&& $e_2 e_2=e_3$& $e_2e_3=(1-\Theta)\lb^{-1}e_4$ 
&$e_3e_3=e_4$\\

\hline$\cd {4}{90}(\lambda)$&$:$& 
$e_1 e_1 = \lambda e_3+(1-2 \Theta)e_4$& $e_1 e_2=e_4$ & $e_1e_3=e_4$& 
\multicolumn{2}{l}{$e_2 e_1=e_3- \Theta^2 \lb^{-1}e_4$}\\
$\lb \neq 0, \frac{1}{4}$&& $e_2 e_2=e_3$& $e_2e_3=(1-\Theta)\lb^{-1}e_4$ 
&$e_3e_3=e_4$\\

\hline$\cd {4}{91}(\lambda, \af)$&$:$& 
$e_1 e_1 = \lambda e_3+(2 \Theta-1)e_4$& $e_1 e_2=e_4$ & $e_1e_3=\af e_4$& \\
$\lb \neq 0, \frac{1}{4}$&& $e_2 e_1=e_3- (1-\Theta)^2 \lb^{-1}e_4$&
$e_2 e_2=e_3$&  $e_3e_3=e_4$\\

\hline$\cd {4}{92}(\lambda, \af)$&$:$& 
$e_1 e_1 = \lambda e_3+(1-2 \Theta)e_4$& $e_1 e_2=e_4$ & $e_1e_3=\af e_4$& \\
$\lb \neq 0, \frac{1}{4}$&& $e_2 e_1=e_3-  \Theta^2 \lb^{-1}e_4$&
$e_2 e_2=e_3$&  $e_3e_3=e_4$\\

\hline$\cd {4}{93}( \af)$&$:$& 
$e_1 e_1 = e_4$ & $e_1 e_2=e_4$ &$e_1e_3=\af e_4$ & $e_2e_1=e_3+e_4$\\
&&$ e_2 e_2=e_3$ &$e_2e_3=\af e_4$ & $e_3e_3=e_4$\\

\hline$\cd {4}{94}(\af, \bt)$&$:$& 
$e_1 e_1 = e_4$ & $e_1e_2=e_4$& $e_1e_3=\af e_4$&\\
$\af\neq0$&&$e_2 e_1=e_3+\beta e_4$   & $e_2 e_2=e_3$   &$e_3e_3=e_4$\\

\hline$\cd {4}{95}(\af)$&$:$& 
$e_1 e_1 =  e_4$ & $e_1e_2=e_4$ &$e_1e_3=\af e_4$ & $e_2 e_1=e_3$\\
&& $e_2 e_2=e_3$ & $e_2e_3=\af e_4$&$e_3e_3=e_4$\\

\hline$\cd {4}{96}(\af)$&$:$& 
$e_1 e_1 =  e_4$ & $e_1e_2=e_4$ & $e_2 e_1=e_3+\af e_4$\\
&& $e_2 e_2=e_3$ & $e_2e_3= e_4$&$e_3e_3=e_4$\\

\hline$\cd {4}{97}(\lambda)$&$:$& 
$e_1 e_1 = \lambda e_3$ & $e_1e_2=e_4$& $e_1e_3=\Theta e_4 $&$e_2 e_1=e_3-e_4$ \\
&& $e_2 e_2=e_3$ &$e_2e_3=e_4$ &$e_3e_3=e_4$\\

\hline$\cd {4}{98}(\lambda)$&$:$& 
$e_1 e_1 = \lambda e_3$ & $e_1e_2=e_4$& $e_1e_3=(1-\Theta) e_4 $&$e_2 e_1=e_3-e_4$ \\
$\lb \neq \frac{1}{4}$ && $e_2 e_2=e_3$ &$e_2e_3=e_4$ &$e_3e_3=e_4$\\

\hline$\cd {4}{99}(\af)$&$:$& 
$e_1e_2=e_4$& $e_1e_3=e_4$& $e_2 e_1=e_3-e_4$\\ 
$\af\neq1$ && $e_2 e_2=e_3$ &$e_2e_3=\af e_4$&$e_3e_3=e_4$\\

\hline$\cd {4}{100}(\af)$&$:$& 
$e_1 e_1 = \frac{1}{4} e_3$ & $e_1e_2=e_4$& $e_1e_3=\af e_4$& $e_2 e_1=e_3-e_4$ \\
$\af\notin\{0, \frac{1}{2}\}$&&  $e_2 e_2=e_3$ &$e_2e_3=2 \af e_4$ &$e_3e_3=e_4$\\

 \hline$\cd {4}{101}(\af, \bt)$&$:$& 
 $e_1e_2=e_4$& $e_1e_3=\af e_4$& $e_2 e_1=e_3$  \\
 && $e_2 e_2=e_3$&$e_2e_3=\bt e_4$ &$e_3e_3=e_4$\\
 
 \hline$\cd {4}{102}(\lb, \af)$&$:$& 
 $e_1 e_1 = \lambda e_3$ & $e_1e_2=e_4$& $e_2 e_1=e_3-e_4$ \\
 $\lb\neq0$&& $e_2 e_2=e_3$&$ e_2e_3=\af e_4$  &$e_3e_3=e_4$\\

\hline$\cd {4}{103}$&$:$& 
$e_1 e_2 = e_4$ & $e_2 e_1=e_3-e_4$  & $e_2 e_2=e_3$ &$e_3e_3=e_4$\\
 
 \hline$\cd {4}{104}$&$:$& 
 $e_1 e_3 =  e_4$ & $e_2 e_1=e_3+e_4$  & $e_2 e_2=e_3$ &$e_2e_3=e_4$&$e_3e_3=e_4$\\
 
 \hline$\cd {4}{105}(\lambda, \af,\bt)$&$:$& 
 $e_1 e_1 = \lambda e_3$ & $e_1e_3=e_4$&$e_2 e_1=e_3+\af e_4$  \\
 $  \lb\ne 0, \af\ne 0$&& $e_2 e_2=e_3$ & $e_2e_3=\bt e_4$&$e_3e_3=e_4$\\
 
 \hline$\cd {4}{106}(\af)$&$:$& $e_1 e_3 = e_4$ & $e_2 e_1=e_3+\af e_4$  & $e_2 e_2=e_3$ &$e_3e_3=e_4$\\
 
 \hline$\cd {4}{107}(\lambda)$&$:$& 
 $e_1 e_1 = \lambda e_3$ & $e_1e_3=\Theta e_4$& $e_2 e_1=e_3$  \\
 && $e_2 e_2=e_3$ & $e_2e_3=e_4$&$e_3e_3=e_4$\\
 
 \hline$\cd {4}{108}(\lambda)$&$:$& 
 $e_1 e_1 = \lambda e_3$ & $e_1e_3=(1-\Theta) e_4$& $e_2 e_1=e_3$  \\
 $\lb \not\in \{0, \frac{1}{4}\}$&& $e_2 e_2=e_3$ & $e_2e_3=e_4$&$e_3e_3=e_4$\\
 
 \hline$\cd {4}{109}(\lambda,\af)$&$:$& $e_1 e_1 = \lambda e_3$ & $e_2 e_1=e_3+e_4$  & $e_2 e_2=e_3$ &$e_2e_3=\alpha e_4$&$e_3e_3=e_4$\\

  \hline$\cd {4}{110}(\lambda)$&$:$& $e_1 e_1 = \lambda e_3$ & $e_2 e_1=e_3$  & $e_2 e_2=e_3$ &$e_2e_3= e_4$&$e_3e_3=e_4$\\
 
   \hline$\cd {4}{111}(\lambda)$&$:$& $e_1 e_1 = \lambda e_3$ & $e_2 e_1=e_3$  & $e_2 e_2=e_3$ &$e_3e_3=e_4$\\
 
\hline$\cd {4}{112}(\lambda, \af, \bt, \gamma)$&$:$& 
$e_1 e_1 = \lambda e_3+e_4$ & $e_1e_3=\af e_4$ & $e_2 e_1=e_3+\bt e_4$  \\
&&$e_2 e_2=e_3$& $e_2e_3=\gamma e_4$&$e_3e_3=e_4$\\
\end{longtable}}

\section{The geometric classification of complex $4$-dimensional nilpotent $\mathfrak{CD}$-algebras}
\begin{theorem}\label{main-geo}
The variety of complex $4$-dimensional nilpotent terminal algebras has $2$ irreducible components: one of dimension $18$ determined by the family of algebras $\cd 4{112}(\lb,\af,\bt,\gm)$ and one of dimension $15$ determined by the family of algebras $\cd 4{12}(\lb,\af)$. 
\end{theorem}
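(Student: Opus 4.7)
The plan is to establish that the Zariski closures of the orbit-unions of the two families $\cd 4{112}(\lb,\af,\bt,\gm)$ and $\cd 4{12}(\lb,\af)$ are precisely the two irreducible components of $\mathbb{L}(T)$, where $T$ is the set of identities defining nilpotent $\mathfrak{CD}$-algebras. I would proceed in three steps: (i) a dimension count for the two candidate components; (ii) an exhaustive construction of degenerations from these two families to every remaining algebra in the classification of the previous section; (iii) non-degenerations showing that the two closures are mutually incomparable. Since every element of $\mathbb{L}(T)$ represents an algebra isomorphic to one from the algebraic classification, and every such orbit is shown to lie in one of the two closures, these closures then exhaust the variety and, being distinct and irreducible, are exactly the irreducible components.

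For step (i), using $\dim\overline{O({\bf A})} = 16 - \dim\der({\bf A})$, I would compute $\dim\der$ for a generic member of each family. One expects $\dim\der = 2$ for generic $\cd 4{112}$, yielding a generic orbit dimension of $14$ and hence a closure of dimension $14+4 = 18$, once one checks that generic parameter values give pairwise non-isomorphic algebras. Analogously, $\dim\der = 3$ for generic $\cd 4{12}$ gives $13+2 = 15$. For step (ii), for each remaining algebra $\cd 4{k}$ I would exhibit a parametric basis $E_1^t,\dots,E_4^t$ and a parametric index $f(t)$ such that the transported structure constants of $\cd 4{112}(f(t))$ or $\cd 4{12}(f(t))$ tend to those of $\cd 4{k}$ as $t\to 0$; when the target is itself parametric, its parameters are absorbed into $f(t)$ and $g(t)$ so that a single construction covers the whole target family. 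This step is best organized as a long table of degenerations.

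For step (iii), I would apply Lemma~\ref{main}. The invariant $\dim({\bf A}^2)$ is a natural first choice: the locus $\{\mu : \dim\im\mu \le k\}$ is a Borel-stable Zariski-closed subset. A direct computation gives $\dim {\bf A}^2 = 2$ for generic $\cd 4{112}$ and $\dim {\bf A}^2 = 3$ for generic $\cd 4{12}$, which rules out $\cd 4{112}(*)\to\cd 4{12}(*)$. For the reverse non-degeneration the same invariant is useless, and one must find a different Borel-stable closed condition; a promising candidate is a polynomial identity in the structure constants expressing the vanishing of a specific associator- or product-type expression that holds generically on $\cd 4{112}$ but fails on $\cd 4{12}$, or equivalently the fact that the square map $x\mapsto x\cdot x$ on $\cd 4{112}$ has an image meeting a canonical one-dimensional subspace complementary to the annihilator, a condition that can be made Borel-stable with an appropriate choice of flag.

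The principal obstacle is the sheer combinatorial volume of step (ii): with over one hundred algebras in the list (many of them one-, two-, three- or four-parameter families), each degeneration requires a carefully chosen pair $(f(t),g(t))$ together with verification that the limit is the target and that no denominators blow up on the chosen curve. A secondary but nontrivial difficulty is constructing the right Borel-stable invariant for $\cd 4{12}(*)\not\to\cd 4{112}(*)$, since the immediate numerical invariants such as $\dim {\bf A}^2$, the dimension of the annihilator, and the nilpotency index all point in the wrong direction.
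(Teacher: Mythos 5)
Your overall architecture (dimension count for the two candidate closures, an exhaustive table of degenerations onto everything else, and mutual non-containment of the two closures) is exactly the paper's strategy, and steps (i) and (ii) are carried out the same way there, including the use of $\dim\overline{O(\mu)}=16-\dim\der$ and of parametric bases/indices. The paper does make one labour-saving observation you should adopt: rather than degenerating onto every trivial (2-step nilpotent) algebra and every terminal extension of $\cd{3*}{04}(\lambda)$ individually, it invokes the known facts that $\mathfrak{N}_2(\alpha)$, $\mathfrak{N}_3(\alpha)$ determine the irreducible components of the 2-step nilpotent variety and that $\overline{O(\D{4}{01}(\lambda,\alpha,\beta))}$ already contains all terminal extensions, so it suffices to hit $\mathfrak{N}_2$, $\mathfrak{N}_3$ and $\D{4}{01}$. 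If you read ``the classification'' as only the displayed list $\cd 4{01}$--$\cd 4{112}$, you would miss those unlisted algebras entirely.

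The genuine gap is in step (iii), in the direction $\cd 4{12}(*)\not\to\cd 4{112}(*)$. You correctly observe that $\dim({\bf A}^2)$ is useless there (it can only drop under degeneration, so it obstructs the other direction only), but you then go looking for an exotic Borel-stable closed invariant that you never actually construct, and it is far from clear that your candidate (``the square map has image meeting a canonical one-dimensional subspace complementary to the annihilator'') defines a closed, Borel-stable condition at all. No such invariant is needed: you already computed in step (i) that $\overline{\bigcup\orb(\cd 4{12}(\lb,\af))}$ has dimension $15$ while $\overline{\bigcup\orb(\cd 4{112}(\lb,\af,\bt,\gm))}$ has dimension $18$, and a closed set of dimension $15$ cannot contain a constructible set of dimension $18$. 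That one line, which is exactly what the paper uses, closes the argument; as written, your proof of this non-degeneration is incomplete.
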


\begin{proof}[{\bf Proof}]
Thanks to \cite{kppv, kppvcor} the algebras 
 $\mathfrak{N}_2(\alpha)$ and $\mathfrak{N}_3(\alpha)$  
 determine the irreducible components in the variety of complex $4$-dimensional $2$-step nilpotent algebras (which are trivial $\mathfrak{CD}$-algebras in our terminology),
 and the orbit closure of $\D{4}{01}(\lambda,\alpha,\beta)$ contains all terminal (Leibniz and non-Leibniz) extensions of $\cd {3*}{04}(\lambda)$ (see \cite{kppvcor,kkp19geo}), where
\begin{longtable}{lllllllllll}

        $\D{4}{01}(\lambda,\alpha,\beta)$&$:$& $e_1 e_1 = \lambda e_3 + e_4$ & $e_1 e_3 = \alpha e_4$ & $e_2 e_1=e_3$ 
        & $e_2 e_2 = e_3$ & $e_2 e_3 = \beta e_4$ & $e_3e_1 = e_4$\\
        
        $\mathfrak{N}_2(\alpha)$&$:$& $e_1e_1 = e_3$ &$e_1e_2 = e_4$  &$e_2e_1 = -\alpha e_3$ &$e_2e_2 = -e_4$ \\

        $\mathfrak{N}_3(\alpha)$&$:$& $e_1e_1 = e_4$ &$e_1e_2 = \alpha e_4$  &$e_2e_1 = -\alpha e_4$ &$e_2e_2 = e_4$  &$e_3e_3 = e_4$

        \end{longtable}
    
Each algebra of the family $\cd 4{112}(\lb,\af,\bt,\gm)$ has $2$-dimensional square, while 
the square of $\cd 4{12}(\lb,\af)$ is of dimension $3$.
Hence, $\cd 4{112}(\lb,\af,\bt,\gm)\not\to\cd 4{12}(\lb,\af)$ by Lemma~\ref{main}. We also have $\cd 4{12}(\lb,\af)\not\to\cd 4{112}(\lb,\af,\bt,\gm)$, because $\overline{\bigcup\orb(\cd 4{112}(\lb,\af,\bt,\gm))}$ has dimension $18$, while $\overline{\bigcup\orb(\cd 4{12}(\lb,\af))}$ is of dimension $15$.
The rest of the algebras degenerate either from $\cd 4{12}(\lb,\af)$, or from $\cd 4{112}(\lb,\af,\bt,\gm)$, as shown below. In some of the degenerations we use the short notation $\Theta=\frac{1+\sqrt{1-4\Lambda}}2$ and $\Psi=\frac{1-\sqrt{1-4\Lambda}}2=1-\Theta$.
        
{\tiny
\begin{longtable}{lll| llll}

 \hline
 $\cd 4{112}$ &$\to$ &  $\D{4}{01}(\Lambda, \A, \B)$ & 
 $\lambda= \Lambda $ &$\af=(\A-1)\Xi $ & $\bt=-\B\Xi^2  $ & $\gamma= \B\Xi  $\\
  \multicolumn{3}{l|}{$\Xi=(t-\A)^{-\frac 12}$}&
 $E_1^t=  t\Xi (e_1+\Xi e_3)$ &$E_2^t=t\Xi e_2$ & $E_3^t= t^2\Xi^2 e_3 $ & $E_4^t= t^3\Xi^4 e_4$\\

 %\hline
 %$\cd 4{12}$ &$\to$ &  $\mathfrak{N}_2( \A)$ & 
 %$\lambda= t^{-1}$ &$\af=-1 $ \\
% &&& 
%\multicolumn{4}{l}{$E_1^t=   e_1+\frac{(1+\A) t^2+\A t^4-1}{\A^2 (t-1) t^3 (1+2 t)}e_2
% -\frac{\A^2 t^4 (1+t^2)^2+(t-1)^2 (t^2-4 t-2)+\A (t^2-1) (2 t^4-6 t^3+3 t^2-2-2 t)}{\A^4 (t-1)^2 t^5 (1+t) (1+2 t)^2}e_3$}\\
% &&&
% \multicolumn{4}{l}{$E_2^t= -\frac{1}{\A t^3}e_2-\frac{-1+t+\A (1+t+t^2+t^3)}{\A^3 (-1+t) t^5 (1+2 t)}e_3$}\\
% &&& \multicolumn{2}{l}{$E_3^t= \frac{1}{\A^2 t^4} e_3 +\frac{(3-3 t-\A (2+t^2+3 t^3)}{\A^4 (t-1) t^6 (1+2 t)}e_4 $} & 
% $E_4^t=\frac{1}{A^2 t^6}  e_4$\\
 
 \hline
 $\cd 4{12}$ &$\to$ & $\mathfrak{N}_2( \A)$ & 
 $\lambda=t^{-1} $ &$\af=t^{-1} $ \\
 \multicolumn{3}{l|}{$\Xi=(\A t(t - 1)(t^2 + 3t + 1))^{-1}$}& 
 \multicolumn{2}{l}{$E_1^t= t e_1-t\Xi(t^2 - \A t - 1)e_2$} &&\\
&&&\multicolumn{4}{l}{ $E_2^t= e_2 + ((\A + 1)t + \A - 1) e_3 + \frac{\Xi^2}{t^2}(\A t^4 + 2(\A-1)(t^3+\frac 1t) - 3(2\A+1)t^2 + (\A^2  - 5\A + 10)t + 3(2\A-1)) e_4
$}\\
&& & \multicolumn{2}{l}{$E_3^t=- \A^{-1}e_3 -\Xi\A^{-1}(\A t^2 - 3t - 2\A + 3) e_4$} & $E_4^t= -t^{-1} e_4$\\

 \hline
 $\cd 4{112}$ &$\to$ & $\mathfrak{N}_3( \A)$ & 
 $\lambda=1 $ &$\af=-t $ & $\bt= \A^2-t^2 $ & $\gamma=t  $\\
 &&& 
 $E_1^t= -\A^2t  (e_1-e_2+te_3)$ &
$E_2^t= \A t e_2$
& $E_3^t=\A^2t  (e_3 -\A^2e_4)$ & $E_4^t= \A^4t^2  e_4$\\

 \hline
 $\cd 4{12}$ &$\to$ &  $\cd 4{01} $ & 
 $\lambda= 2$ &$\af= 3t^{-4} $  \\
 &&& $E_1^t= t  e_1$ &$E_2^t= t^2e_2$ & $E_3^t=  3e_4 $ & $E_4^t=  e_3$\\

 \hline
 $\cd 4{12}$ &$\to$ &  $\cd 4{02} $ & 
 $\lambda=2 $ &$\af=t^{-4} $  \\
\multicolumn{3}{l|}{$\Xi=(9t^4+1)^{-1}$}& 
$E_1^t=  t (e_1-t \Xi) e_2$ &
\multicolumn{2}{l}{ $E_2^t= t^2e_2-3 t^3 \Xi e_3+\Xi^2 e_4$}\\&& & 
 $E_3^t=  e_4 $ & $E_4^t=  e_3-\Xi t^{-3}e_4$\\

 \hline
 $\cd 4{12}$ &$\to$ &  $\cd 4{03}$ & 
 $\lambda=-2 $ &$\af=0 $  \\
 &&& $E_1^t= te_1-\frac{1}{3t^{2}}e_2$ &$E_2^t= t^2e_2-\frac{1}{3t^1}e_3$  
 & $E_3^t=  e_4$ & $E_4^t= t^2 e_3+\frac{4}{3t}e_4 $\\

 \hline
 $\cd 4{12}$ &$\to$ &  $\cd 4{04}(\Lambda)$ & 
 $\lambda= \Lambda$ &$\af=0 $  \\
 &&& $E_1^t=  t e_1 $ &$E_2^t= t^2 e_2 $ & $E_3^t= e_4$ & $E_4^t=  e_3-t^{-3}e_4 $  
  \\

 \hline
 $\cd 4{12}$ &$\to$ &  $\cd 4{05}$ & 
 $\lambda=t^{-1}$ &$\af= t^{-3} $\\
 &&& $E_1^t=   te_1$ &$E_2^t=t^2 e_2$   & $E_3^t= t^2 e_4$ & $E_4^t=  te_3 $\\

 \hline
 $\cd 4{12}$ &$\to$ &  $\cd 4{06}$ & 
 $\lambda= t^{-1}$ &\multicolumn{2}{l}{$\af= -\Xi t^{-2}(t^3+t^2-t-2)$}  \\
 \multicolumn{3}{l|}{$\Xi=(t-1)^{-1}$}& 
 \multicolumn{1}{l}{$E_1^t=   te_1-e_2$} &
 \multicolumn{2}{l}{$E_2^t=t^2 e_2-(t+1)e_3 -\Xi t^{-2}(t^3+t^2-t-2) e_3$} \\
 &&& \multicolumn{1}{l}{$E_3^t=t^2  e_3+\Xi(3 t^2-4)e_4 $} & $E_4^t=  e_4$\\

 \hline
 $\cd 4{12}$ &$\to$ &  $\cd 4{07}(\Lambda)$ & 
 $\lambda=\Lambda $ &$\af=(\Lambda-2)t^{-1} $\\
 &&& $E_1^t=  t e_1$ &$E_2^t=t^2 e_2$ & $E_3^t=  t^3 (\Lambda-2)e_4 $ & $E_4^t= t^3 e_3$\\

 \hline
 $\cd 4{112}$ &$\to$ &  $\cd 4{08}( \A)$ & 
 $\lambda=t^{-1} $ &$\af= \A(1-2t)t^{-2} $ \\
 &&& $E_1^t= t  e_1$ &$E_2^t= t^2e_2$ & $E_3^t= t^2 e_3 $ & $E_4^t=t^2(1-2t)  e_4$\\

 \hline
 $\cd 4{12}$ &$\to$ &  $\cd 4{09}$ & 
 $\lambda= t^{-1} $ &$\af=(2t-1)t^{-2} $ \\
 \multicolumn{3}{l|}{$\Xi=((t-1)(t+4))^{-1}$}& 
 \multicolumn{2}{l}{$E_1^t=  t e_1+\Xi(2t-1)(t e_2+\Xi (2t-1)^2(t+1)^{-1}e_3)$} &
\multicolumn{2}{l}{$E_2^t= t^2e_2+\Xi t (t+1) (2 t-1)e_3$}\\ 
&&& 
\multicolumn{2}{l}{$E_3^t=t^2  e_3+\Xi t (2 t-1) (t^2+t-3)e_4 $} & $E_4^t= t^2(1-2 t) e_4$\\

 \hline
 $\cd 4{12}$ &$\to$ &  $\cd 4{10}( \A)$ & 
 $\lambda=1+t $ &$\af=\A $ & \\
 \multicolumn{3}{l|}{$\Xi=(t ((t+2)^2+\A))^{-1}$}& 
 \multicolumn{2}{l}{$E_1^t=  e_1+\Xi(t^3-1)(e_2+\A\Xi(t+2)^{-1}(t^3-1) e_3)$} &
 \multicolumn{2}{l}{$E_2^t= e_2+\Xi (t+2) (t^3-1)e_3$}\\ 
 &&& 
 \multicolumn{2}{l}{$E_3^t=  e_3+\Xi(1-t) ((t+2)(2t^2+1)-\A(t+1))e_4 $} & $E_4^t=  (1-t)e_4$\\
 
  \hline
 $\cd 4{12}$ &$\to$ &  $\cd 4{11}(\Lambda )$ & 
 $\lambda= \Lambda$ &$\af=t-(\Lambda+1)^2 $ & \\
\multicolumn{3}{l|}{$\Xi=\Lambda-1$}& 
 \multicolumn{1}{l}{$E_1^t=  t\Xi(e_1+(t+1)e_2)$} &
 \multicolumn{3}{l}{$E_2^t=t^2\Xi^2(e_2-(t+1) ((\Lambda+1) e_3-(t+1)(t-(\Lambda+1)^2))e_4)$}\\ &&& 
  \multicolumn{2}{l}{$E_3^t= t^3\Xi^3(e_3 - (t+1)(t-3 (\Lambda+1))e_4)$} & $E_4^t= t^4 \Xi^4 e_4$\\

 \hline
 $\cd 4{112}$ &$\to$ &  $\cd 4{13}(\A)$ & 
 \multicolumn{4}{l}{$\Xi=\sqrt{(\A+\A^2)^3 (t^3+(2\A+1)(t^2+\A^2)+\A^4)}$}\\
 &&&
  \multicolumn{2}{l}{$\lambda=\frac{\A^2 (1+\A)^2}{t (\A+t)^2} $} &
   \multicolumn{2}{l}{$\af=-\frac{\A (1+\A) \sqrt{t} (\A+t)}{\Xi} $}\\
   &&& 
    \multicolumn{2}{l}{$\bt= \frac{t (\A+t) (1+\A+t)}{\A^2+2 \A^3+\A^4+t^2+2 \A t^2+t^3} $} & $\gamma= -\frac{\A (1+\A) \sqrt{t} (\A+t)}{\Xi}  $\\&&
 
 & \multicolumn{2}{l}{$E_1^t=- \frac{\sqrt{t^5}  (\A+t)^2}{\Xi}   \left(e_1-e_2\right)-
 \frac{t^3 (\A(t+\A)(1+\A))^2}{\Xi^2}e_3$} \\&&&
 \multicolumn{2}{l}{$E_2^t= \frac{t^4 \A^2(t+\A)^2(1+\A)^2}{\Xi^2} \left(e_3 + \frac{t\A^3 (\A+t)^2  (1+\A)^3}{\Xi^2}e_4\right)$} \\&& & 
 \multicolumn{2}{l}{$E_3^t=  \frac{\A (1+\A) \sqrt{t^5} (\A+t)}{\Xi}\left(e_2
 -\sqrt t (\A+t) \A^2(1+\A)e_3\right)$} & $E_4^t=-\frac{t^7 (\A+t)^4}{\Xi^4}  e_4$\\

 \hline
 $\cd 4{112}$ &$\to$ &  $\cd 4{14}( \A,\B)$ & 
 \multicolumn{2}{l}{$\Xi=\sqrt{(\B-t) (\B^2-\B t+(\A-1) (\A-t) t)}$}\\
&&& $\lambda=\frac{(\B-t)^2}{(\A-t)^2 t} $ &$\af=\frac{i (1-t) \sqrt{t} (t-\A)}{\Xi} $ & 
$\bt= \frac{(\A-1) (\A-t)(\B-t) t}{\Xi} $ & $\gamma= \frac{i (1-t) \sqrt{t} (t-\A)}{\Xi}  $\\
 &&& 
 \multicolumn{2}{l}{$E_1^t=(\A-t)^2 \sqrt{t^3}\left(\frac{i}{(\B-t)\Xi }\left(e_1-e_2\right) -\frac{\sqrt t}{\Xi^2}e_3\right)$} &
 \multicolumn{2}{l}{$E_2^t= -\frac{(\A-t)^2 t^2}{\Xi^2} \left(e_3+ \frac{(\A-t)^2 t (\B-t)}{\Xi^2}e_4\right)$} \\&& & 
\multicolumn{2}{l}{$E_3^t= \frac{\sqrt{t^3} (t-\A)}{\Xi} \left(ie_2-\frac{(t-\A) \sqrt{t^3}}{\Xi}e_3\right) $} & 
    $E_4^t= \frac{(\A-t)^4 t^4}{\Xi^4} e_4$\\

 \hline
 $\cd 4{112}$ &$\to$ &  $\cd 4{15}(\A)$ & 
 $\lambda=\frac{1}{t} $ &$\af= \frac{\A-t}{\A \Xi} $ & $\bt=-\frac{t}{\Xi^2}  $ & $\gamma=- \frac{t}{\A\Xi}  $\\
 \multicolumn{3}{l|}{$\Xi= i\sqrt{\A+\A^2+t}$}& 
 $E_1^t=  -\frac{t^2}{\Xi}\left(e_1-e_2\right)+\frac{(1+\A) t^2}{\Xi^2} e_3$ & $E_2^t= \frac{t^3}{\Xi^2}e_3$
 & 
 $E_3^t=  \frac{t^2}{\Xi}e_2+\frac{t^3}{\A \Xi^2}e_3 $ & $E_4^t= \frac{t^5}{\Xi^4}  e_4$\\

 \hline
 $\cd 4{112}$ &$\to$ &  $\cd 4{16}$ & 
 $\lambda= 2+\frac{1}{4 t}+4 t $ &$\af=\frac{4 \sqrt{t} (1+4 t)}{\Xi}$ & $\bt= \frac{4 t (1+4 t)^3}{\Xi^2} $ & $\gamma= \frac{4 \sqrt{t} (1+4 t)}{\Xi}  $\\
 \multicolumn{3}{l|}{$\Xi =i\sqrt{1+20 t+176 t^2}$}& \multicolumn{2}{l}{$E_1^t=-\frac{8 \sqrt{t^5}}{ \Xi }\left(e_1-e_2\right)+\frac{16 t^3 (1+4 t)^2}{\Xi^2}e_3$} &
 \multicolumn{2}{l}{$E_2^t= \frac{16 t^4 (1+4 t)^2}{\Xi^2}\left( e_3-\frac{4 t (1+4 t)^3}{\Xi^2}e_4\right)$}\\ 
 &&& 
 \multicolumn{2}{l}{$E_3^t= \frac{4 \sqrt{t^5} (1+4 t)}{\Xi} \left(e_2 - \frac{2\sqrt t (1+4 t)}{\Xi}e_3\right)$} & $E_4^t= \frac{256 t^7 (1+4 t)^4}{\Xi^4} e_4$\\

  \hline
 $\cd 4{112}$ &$\to$ &  $\cd 4{17}(\A)$ & 
 $\lambda= \frac{(1+\A)^2}{t}$ &$\af=\frac{\sqrt{t}}{\sqrt{\A(1+\A)^3}} $ & $\bt=\frac{t}{\A+\A^2}  $ & $\gamma= \frac{\sqrt{t}}{\sqrt{\A(1+\A)^3}} $\\
 &&& 
 \multicolumn{2}{l}{$E_1^t=-\frac{\sqrt{t^5}}{\sqrt{\A(1+\A)^5}}(e_1-e_2)+
 \frac{t^3}{\A(1+\A)^3}e_3 $} &
 \multicolumn{2}{l}{$E_2^t= \frac{t^4}{\A(1+\A)^3}\left(  e_3 +\frac{t}{(1+\A)^2}e_4\right) $} \\
 &&& \multicolumn{2}{l}{$E_3^t= \frac{\sqrt{t^5}}{\sqrt{\A(1+\A)^3}} e_2 +
 \frac{t^3}{(1+\A)^3}e_3 $} & $E_4^t= \frac{t^7}{\A^2(1+\A)^6}   e_4$\\

 \hline
 $\cd 4{112}$ &$\to$ &  $\cd 4{18}( \A)$ & 
 $\lambda= -\frac{1}{\Xi^2 t}$ &$\af=\frac{i(t \Xi^2 -\A)}{\A \Xi} $ & $\bt= 0 $ & $\gamma=\frac{i t \Xi}{\A}  $\\
\multicolumn{3}{l|}{$\Xi= \sqrt{1+\A+\A^2}$}& 
\multicolumn{2}{l}{$E_1^t= -i t^2 \Xi  (e_1-e_2)+(1+\A)t^2 e_3$} & \multicolumn{2}{l}{$E_2^t=t^3 (e_3-te_4)$}\\
&&& \multicolumn{2}{l}{$E_3^t= t^2 \left(e_2-\frac{it \Xi}{\A} e_3\right) $} & $E_4^t=  t^5e_4$\\

 \hline
 $\cd 4{112}$ &$\to$ &  $\cd 4{19}$ & 
 $\lambda=\frac{1}{t} $ &$\af=\frac{t}{\Xi} $ & $\bt=\frac{t}{\Xi^2}  $ & $\gamma=\frac{t}{\Xi}  $\\
 \multicolumn{3}{l|}{$\Xi= \sqrt{2-t}$} & 
 \multicolumn{1}{l}{$E_1^t= -\frac{t^2}{\Xi} e_1+\frac{t^2}{\Xi}e_2+\frac{t^2}{\Xi^2}e_3$} 
 &$E_2^t= \frac{t^3}{\Xi^2}e_3-\frac{t^4}{\Xi^4}e_4$ & \multicolumn{1}{l}{$E_3^t= -\frac{t^2}{\Xi}e_2+\frac{t^3}{\Xi^2} e_3 $} & $E_4^t= \frac{t^5}{\Xi^4}  e_4$\\

 \hline
 $\cd 4{112}$ &$\to$ &  $\cd 4{20}$ & 
 $\lambda=\frac{t-1+it\sqrt{1+t}}{t^2+t} $ &$\af=0 $ & $\bt=-\frac{it}{\sqrt{1+t}}  $ & $\gamma=0  $\\
 &&& 
 \multicolumn{1}{l}{$E_1^t=t^2(i\sqrt{1+t} e_1+e_2+e_3)$} &\multicolumn{1}{l}{$E_2^t= t^3e_2$}& \multicolumn{1}{l}{$E_3^t=  t^2e_3 $} & $E_4^t= t^5 e_4$\\

 \hline
 $\cd 4{112}$ &$\to$ &  $\cd 4{21}( \A)$ & 
\multicolumn{4}{l}{$\Xi=1-2 t+2 t^2-2 t^4+3 t^5-2 t^6+t^7+\A^2 (1-t+t^2)+\A (-2+3 t-3 t^2+2 t^3-2 t^4)$}\\
&&& \multicolumn{4}{l}{$\lambda= -\frac{t (1-2 t+3 t^2-2 t^3+t^5-t^6+t^7+\A^2 (1-t^3+t^4)+\A (-2+2 t-2 t^2+t^3+t^4-2 t^5))}{(t-1) (\A-1+t-t^2)^2}$} \\
&&&$\af=t^2$ & \multicolumn{1}{l}{$\bt=\frac{(1-t) (1-\A-t+2 t^2)}{t (1-\A-t+t^2)}  $}   &  $\gamma=1  $
  \\
 &&& \multicolumn{2}{l}{$E_1^t= t(t-1)^2\left(\frac{1-\A-t+t^2}{\Xi}  e_1+\frac{(t-\A) t^2}{\Xi}e_2-\frac{(t-1) t^2}{\Xi}e_3\right)$}& $E_4^t= \frac{(t-1)^6 t^6}{\Xi^2} e_4$ \\&&&
\multicolumn{2}{l}{ $E_2^t= \frac{(1-t)^3 t^2}{\Xi}\left(t e_3+(1-t)e_4\right)$} & 
\multicolumn{2}{l}{$E_3^t=  \frac{(1-t)^3 t^3}{\Xi}\left(te_3+e_4\right) $} \\

 \hline
 $\cd 4{112}$ &$\to$ &  $\cd 4{22}$ & 
\multicolumn{2}{l}{$\Xi=\sqrt{t^3 (-16-64 t-64 t^4+17 t^5+72 t^6+16 t^7)}$}&
\multicolumn{2}{l}{$\lambda=-\frac{8 t^3 (-8+9 t^5+4 t^6-t \Xi)}{(t^4+4 t^5-\Xi)^2} $}\\
&&&$\af=\frac{4 t^3 (-4-t+4 t^2)}{t^4+4 t^5-\Xi} $ & 
 \multicolumn{2}{l}{$\bt=\frac{32 t^3-9 t^4-40 t^5-16 t^6+(1+4t)\Xi}{4 t^4+16 t^5-4\Xi}$}   & $\gamma= 1 $\\
&&& 
$E_1^t=\frac{t^4+4 t^5-\Xi}{8} e_1+t^4e_2$ &$E_2^t=t^3 e_3-\frac{t^3(1+4t)}{4}e_4$ & $E_3^t= t^2 e_2-\frac{t^2}{2}e_3 $ & $E_4^t=  t^5e_4$\\

  \hline
 $\cd 4{112}$ &$\to$ &  $\cd 4{23}( \A)$ & 
 \multicolumn{2}{l}{$\lambda= \frac{\Xi (\Xi+t^2)}{t (\Xi+t)^2} $} &
 \multicolumn{2}{l}{$\af=\frac{(1-\A) \sqrt{t^3}}{\Xi\sqrt{ \Xi+t^2}} $}\\
 \multicolumn{3}{l|}{$\Xi=\A+\A^2$}  & 
 \multicolumn{2}{l}{$\bt=\frac{t (\Xi+t)}{\Xi+t^2}  $} & 
 \multicolumn{2}{l}{$\gamma= \frac{\sqrt{t} (\Xi+t)}{\Xi\sqrt{\Xi+t^2
 }}$}\\
 &&& 
 \multicolumn{2}{l}{$E_1^t=  \frac{\sqrt{t^5} (\Xi+t)}{\Xi^2\sqrt{ \Xi+t^2}}\left((\Xi+t) e_1-te_2+\frac{\sqrt{t^3} (\Xi+t)}{(1+\A) \sqrt{\Xi+t^2}}e_3\right)$} 
 &
 \multicolumn{2}{l}{$E_2^t= \frac{t^4 (\Xi+t)^2}{\Xi^2 (\Xi+t^2)}\left(e_3+\frac{t(\Xi+t)^2}{\Xi (\Xi+t^2)}e_4\right)$}\\
 &&& \multicolumn{2}{l}{$E_3^t=\frac{\sqrt{t^5}  (\Xi+t)}{\Xi\sqrt{ \Xi+t^2}}\left(  e_2 +\frac{\sqrt t (\Xi+t)}{ (1+\A) \sqrt{\Xi+t^2}} e_3 \right)$} & 
 \multicolumn{2}{l}{$E_4^t= \frac{t^7 (\Xi+t)^4}{\Xi^4 (\Xi+t^2)^2} e_4$}\\

 \hline
 $\cd 4{112}$ &$\to$ &  $\cd 4{24}( \A)$ & 
 \multicolumn{2}{l}{$\lambda=\frac{\A^2-t-\A t^2}{\A^2 t^3} $} &
 \multicolumn{2}{l}{$\af= \frac{\sqrt{t} (-2 \A+t+\sqrt{1-4 \A} t-2 \A^2 t^2)}{2 i\A^2 \sqrt{\A+t}} $}\\
 &&& 
 \multicolumn{2}{l}{$\bt= \frac{t^3 (2+2 \A^2+(3+\sqrt{1-4 \A}) \A t)}{2 \A (\A+t)}  $} & $\gamma=\frac{\sqrt{t^3}}{i\sqrt{\A+t}}  $\\
 &&& 
 \multicolumn{2}{l}{$E_1^t= -\frac{i\sqrt{t^5}}{\sqrt{\A+t}}\left(t e_1+\frac{1}{\A}e_2-\frac{i\sqrt{t^7}}{\sqrt{\A+t}}e_3\right)$} &
 \multicolumn{2}{l}{$E_2^t=-\frac{t^4}{\A+t} \left(e_3+ t^3e_4\right)$}\\ && & 
 \multicolumn{2}{l}{$E_3^t=  \frac{\sqrt{t^5}}{\sqrt{\A+t}}\left(-ie_2 +\frac{(1-\sqrt{1-4 \A}) \sqrt{t^3}}{2 \sqrt{\A+t}}e_3 \right)$} & $E_4^t= \frac{t^8}{(\A+t)^2}  e_4$\\

 \hline
 $\cd 4{112}$ &$\to$ &  $\cd 4{25}$ & 
 $\lambda= \frac{t-t^4-1}{t^8}$ &$\af= \frac{i(t-2)}{\sqrt{2t}} $ & $\bt=\frac{t^4(t^3-1)}{2}  $ & $\gamma=-i\sqrt{2t^7}  $\\
 &&& 
 \multicolumn{2}{l}{$E_1^t= i\sqrt{\frac{t^{7}}{2}}\left(t^4 e_1+   e_2+i\sqrt{\frac{t^9}{2}}e_3\right)$} &
 \multicolumn{2}{l}{$E_2^t= -\frac{t^8}{2}\left(e_3+\frac{t^{7}}{2}e_4\right)$}\\
 &&& 
 \multicolumn{2}{l}{$E_3^t= \sqrt{\frac{t^{11}}{2}}\left(i e_2-\sqrt{\frac{t^7}{2}}e_3\right) $} & \multicolumn{2}{l}{$E_4^t=\frac{t^{16}}{4}  e_4$}\\
 
  \hline
 $\cd 4{44}$ &$\to$ &  $\cd 4{26}(\A)$ & 
  & $\af=0$  & $\bt= \A t^{-3}$ & $\gamma=t^{-3}$\\
 &&& $E_1^t=  t^{-1}(e_1 - e_3)$ &$E_2^t= t^{-2}e_3$ & $E_3^t= e_2$ & $E_4^t= t^{-4}e_4$\\

% \hline
% $\cd 4{112}$ &$\to$ &  $\cd 4{27}( \A)$ & 
% $\lambda= -t$ &$\af=\frac{1 + (1 - \A) t^2}{t\Xi} $ & $\bt=\frac{1+\A}{\Xi^2}  $ & $\gamma=\frac{1}{t\Xi}  $\\
% \multicolumn{3}{l|}{$\Xi=\sqrt{1 + \A - \A t^2}$}& 
% $E_1^t=\frac{t}{\Xi}e_1+\frac{t}{\Xi}e_2\frac{t^2}{\Xi^2}e_3$ &
% $E_2^t=-\frac{t^3}{\Xi^2} e_3$ & $E_3^t= -\frac{t^2}{\Xi} \left(te_2-\frac{1}{\Xi}e_3\right) $ & $E_4^t=\frac{t^4}{\Xi^4}  e_4$\\

  \hline
 $\cd 4{112}$ &$\to$ &  $\cd 4{27}$ & 
 $\lambda=-2+t^{-4} $ &$\af=-\frac{\sqrt{t}}{\Xi} $ & $\bt=\frac{2 t^4-2 t^5-\sqrt{t^5}}{\Xi^2}  $ & $\gamma= 0 $\\
 \multicolumn{3}{l|}{$\Xi=\sqrt{1-t}$}&
 $E_1^t= \frac{\sqrt{t^{11}}}{\Xi} \left(e_1+e_2\right)+\frac{t^3}{\Xi^2}e_3$ &
 $E_2^t= \frac{t^7}{\Xi^2}\left(e_3+t^{4}e_4\right)$
 &$E_3^t=\frac{t^4}{\Xi}\left(e_2+\frac{t^2}{\Xi} e_3\right) $ & $E_4^t= \frac{t^{13}}{\Xi^4} e_4$\\

  \hline
 $\cd 4{112}$ &$\to$ &  $\cd 4{28}(\A)$ & 
 $\lambda=-2+t^{-2} $ &$\af= \frac{-\sqrt{t}+t-\A t}{\Xi}$ & $\bt=-\frac{\sqrt{t^3}}{\Xi^2} +2t^2 $ & $\gamma=0  $\\
 \multicolumn{3}{l|}{$\Xi=\sqrt{1-\A}$}& $E_1^t=\frac{\sqrt{t^5}}{\Xi}\left(e_1+e_2\right)+\frac{t^3}{\Xi^2}e_3$ &
 $E_2^t= \frac{t^3}{\Xi^2}\left(e_3+t^2e_4\right)$ & $E_3^t= \frac{t^2}{\Xi} \left(e_2+\frac{t}{\Xi}e_3\right)$ & $E_4^t= \frac{t^6}{\Xi^4} e_4$\\

 \hline
 $\cd 4{112}$ &$\to$ &  $\cd 4{29}$ & 
 $\lambda=-t^{-5} $ &$\af=  2\Xi$ & $\bt= - 4 \Xi^2 $ & $\gamma=- 2 \Xi  $\\
 \multicolumn{3}{l|}{$\Xi=\frac{t^5}{\sqrt{1+4t^2}}$}& 
 $E_1^t=  2 t^8 \Xi\left(-e_1+e_2+4\Xi e_3\right)$ &
 $E_2^t= -4 t^6 \Xi^2 \left(e_3-t^{10} e_4\right)$
 & $E_3^t= 2 t^4\Xi \left(e_2+\Xi e_3\right) $ & $E_4^t= 16t^{10}\Xi^4 e_4$\\

 \hline
 $\cd 4{112}$ &$\to$ &  $\cd 4{30} $ & 
 $\lambda=-2+t^{-5} $ &$\af=-\frac{i\sqrt{t}}{\Xi}  $ & $\bt=\frac{t^3-2 t^5}{\Xi^2}  $ & $\gamma=0  $\\
 \multicolumn{3}{l|}{$\Xi=\sqrt{2 t^5-t^3-1}$}& 
$E_1^t= \frac{i \sqrt{t^{13}}}{\Xi} \left( e_1+e_2\right)-\frac{t^7}{\Xi^2}e_3$ &
$E_2^t=-\frac{t^8}{\Xi^2}\left( e_2-\frac{t^{5}}{\Xi^2}e_3\right)$ & 
$E_3^t=\frac{\sqrt{t^9}}{\Xi}\left(i e_2 -\frac{t^5}{\Xi} e_3\right) $ & $E_4^t= \frac{t^{15}}{\Xi^4}e_4$\\

 \hline
 $\cd 4{112}$ &$\to$ &  $\cd 4{31}$ & 
 $\lambda=-2+t^{-2} $ &$\af=\frac{i t (1-\sqrt{t^3})}{\Xi} $ & $\bt= -\frac{2 t^2+\sqrt{t^5}-\sqrt{t^7}}{\Xi^2} $ & $\gamma= 0 $\\
 \multicolumn{3}{l|}{$\Xi=\sqrt{2 t^2-1}$}& 
 $E_1^t= \frac{i \sqrt{t^7}}{\Xi}  (e_1+ e_2)$ &$E_2^t= -\frac{t^5}{\Xi^2}\left(e_3-\frac{t^2}{\Xi^2}e_4\right)$ & $E_3^t= \frac{t^3}{\Xi}  \left(ie_2-\frac{t}{\Xi}e_3\right) $ & $E_4^t= \frac{t^9}{\Xi^4} e_4$\\

 \hline
 $\cd 4{112}$ &$\to$ &  $\cd 4{32}( \A)$ & 
 $\lambda= -2 +t^{-5}$ &$\af=-\frac{2 i \sqrt{t^5}}{\Xi} $ & $\bt=-\frac{(1-2 \A) t^5}{\A+t^5-2 \A t^5}  $ & $\gamma=\frac{i \sqrt{t^5}}{\Xi}  $\\
\multicolumn{3}{l|}{$\Xi=\sqrt{(1+\A) ( 2 \A t^5-t^5-\A) }$}& 
\multicolumn{1}{l}{$E_1^t= \frac{i \sqrt{t^{13}}}{\Xi}\left( e_1+  e_2\right)-\frac{t^9}{\Xi^2}e_3$} &
\multicolumn{2}{l}{$E_2^t=-\frac{t^8}{\Xi^2} \left(e_3-\frac{\A(1+\A)t^{5}}{\Xi^2}e_4\right)$}\\
&&&
\multicolumn{1}{l}{$E_3^t= \frac{i \sqrt{t^9}}{\Xi} \left(i e_2-\frac{\A \sqrt{t^5}}{\Xi}e_3\right) $} & \multicolumn{2}{l}{$E_4^t= \frac{t^{15}}{\Xi^4} e_4$}\\

  \hline
 $\cd 4{112}$ &$\to$ &  $\cd 4{33}$ & 
 $\lambda=-2+t^{-2} $ &$\af=-i(t^3-\sqrt{t}) $ & $\bt= 2 t^2  $ & $\gamma= it^3  $\\
 &&& 
 $E_1^t= -i\sqrt{t^5}  (e_1+e_2)-t^3e_3$ & $E_2^t= -t^3(e_3+t^2 e_4)$
 &
 $E_3^t=-t^2(i e_2 +t^3e_3)$ & $E_4^t=  t^6e_4$\\

 \hline
 $\cd 4{112}$ &$\to$ &  $\cd 4{34}$ & 
 $\lambda=-2+t^{-2} $ &$\af=\frac{-\sqrt{t^3}+t^2}{\Xi} $ & $\bt= \frac{2 t^2}{\Xi} $ & $\gamma=-\frac{t^2}{\Xi}  $\\
\multicolumn{3}{l|}{$\Xi=\sqrt{2 t^2-1}$}& 
$E_1^t= \frac{ \sqrt{t^5}}{\Xi} (e_1+e_2)+\frac{t^4}{\Xi^2}e_3$ &
$E_2^t=\frac{t^3}{\Xi^2} \left(e_3-\frac{t^2}{\Xi^2}e_4\right)$
&
$E_3^t=\frac{t^2}{\Xi}  \left(e_2+\frac{t^2}{\Xi}e_3\right) $ & $E_4^t= \frac{t^6}{(1+t)\Xi^4} e_4$\\

 \hline
 $\cd 4{112}$ &$\to$ &  $\cd 4{35}$ & 
 $\lambda= t^{-1}$ &$\af=0 $ & $\bt=0  $ & $\gamma=-i \sqrt{t^3}  $\\
 &&& $E_1^t= i \sqrt{t^3}  e_1$ &$E_2^t= -t^2(e_3+t e_4)$ & $E_3^t= \sqrt{t^3}(i e_2 -\sqrt{t^3} e_3)$ & $E_4^t=t^4  e_4$\\

  \hline
 $\cd 4{112}$ &$\to$ &  $\cd 4{36}( \A)$ & 
 $\lambda=-\frac{1}{\A} $ &$\af=-\frac{2\sqrt{t^3}}{\A \sqrt{t^3}+\Xi} $ & $\bt= -\frac{\A((2+\A)\sqrt{t^3}+\Xi)}{\A\sqrt{t^3}+\Xi} $ & $\gamma=1  $\\
 \multicolumn{3}{l|}{$\Xi=\sqrt{\A (-4+(4+\A) t^3)}$}& 
$E_1^t=  \frac{A t^2 +\sqrt{t}\Xi}{2} e_1+t^2e_2$ &$E_2^t= t(e_3- \A e_4)$ & $E_3^t= t e_2+\frac{(\sqrt{1-4\A}-1)\A t}{2}e_4 $ & $E_4^t=  t^2e_4$\\

 %\hline
 %$\cd 4{112}$ &$\to$ &  $\cd 4{38}$ & 
 %$\lambda=0 $ &$\af=0 $ & $\bt=0  $ & $\gamma=0  $\\
 %&&& 
 %$E_1^t=(t-1)\sqrt{t}(e_1-te_2)$ & $E_2^t= (t-1)^4t e_4$
 %&
 %$E_3^t= (1-t)\sqrt{t} e_2 $ & $E_4^t=(t-1)^2t  e_3$\\

 \hline
 $\cd 4{112}$ &$\to$ &  $\cd 4{37}$ & 
 $\lambda=t^{-1} $ &$\af= \frac{it}{\sqrt{2}}$ & $\bt=0  $ & $\gamma= \frac{it}{\sqrt{2}} $\\
 &&& 
 \multicolumn{1}{l}{$E_1^t= -\frac{it^2}{\sqrt{2}}  \left(e_1-e_2-\frac{i}{\sqrt{2}}e_3\right)$} &
 \multicolumn{1}{l}{$E_2^t= -\frac{t^3}{2}\left(e_3+\frac{t}{2}e_4\right)$}& 
 \multicolumn{1}{l}{$E_3^t=  \frac{t^2}{\sqrt{2}}\left(ie_2+\frac{t}{\sqrt 2}e_3\right) $} & $E_4^t=\frac{t^5}{4}  e_4$\\

% \hline
 %$\cd 4{112}$ &$\to$ &  $\cd 4{40}$ & 
 %$\lambda= t^{-3}+t^4-t^8$ &$\af= \frac{t^5-1-2 t}{\sqrt{t^5}}$ & $\bt=  -t^{-4}$ & $\gamma=-\sqrt{t^{-3}}  $\\
 %&&& 
 %$E_1^t=  -t e_1+(t-t^5)e_2-\sqrt{t^{-1}}e_3$ &
%$E_2^t= t^{-1}e_3$
% &
% $E_3^t=  e_2+\sqrt{t^{-3}}e_3 $ & $E_4^t= t^{-3} e_4$\\

 \hline
 $\cd 4{112}$ &$\to$ &  $\cd 4{38}$ & 
 $\lambda= 1$ &$\af=0 $ & $\bt=1  $ & $\gamma=0  $\\
 &&& $E_1^t=\sqrt{t^3}   e_1$ &$E_2^t= t^3(e_3+e_4)$ & $E_3^t=  t^2(e_2+e_3) $ & $E_4^t= t^5 e_4$\\

% \hline
% $\cd 4{112}$ &$\to$ &  $\cd 4{42}$ & 
% $\lambda= t^{-3} $ &$\af=0 $ & $\bt=0  $ & $\gamma=0  $\\
% &&& $E_1^t= -i t^4   (e_1-e_2)$ &$E_2^t=-t^5 (e_3+t^3e_4)$ & $E_3^t=  t^3e_2 $ & $E_4^t=-t^9  e_4$\\

  \hline
 $\cd 4{44}$ &$\to$ &  $\cd 4{39}$ & 
  \multicolumn{2}{r}{$\af=\frac 1{2t^2}(2t^3 + 3t^2 + 24t - 36)$} & $\bt= \frac 9{t^2}\sqrt{\Xi} $ & $\gm=-\frac {\sqrt{\Xi}}{2t}(t + 6)
  $\\
  \multicolumn{3}{l|}{$\Xi=t-1$} & \multicolumn{1}{l}{$E_1^t= \Xi(e_1 + \frac 1{\sqrt{\Xi}}e_2 + e_3)$} &\multicolumn{2}{l}{$E_2^t=i\Xi(e_1-\sqrt{\Xi}e_2-(\frac t2 + 2)e_3-\frac {3\Xi}{2t}(t + 3)e_4)$} \\
 &&&\multicolumn{1}{l}{$E_3^t=\Xi(te_3+\frac {3\Xi}t(t + 3)e_4)$} & \multicolumn{2}{l}{$E_4^t=t\Xi^2e_4$}\\
 
 \hline
 $\cd 4{44}$ &$\to$ &  $\cd 4{40}$ & 
&  \multicolumn{1}{l}{$\af=t - \frac 34$} & $\bt= 0 $ & $\gm=\sqrt{\Xi}$\\
   \multicolumn{3}{l|}{$\Xi=t-1$}& \multicolumn{1}{l}{$E_1^t= \Xi(e_1 + \frac 1{\sqrt{\Xi}}e_2 -\frac 12e_3)$} &\multicolumn{2}{l}{$E_2^t=i\Xi(e_1-\sqrt{\Xi}e_2+(t - \frac 12)e_3)$} \\
 &&&\multicolumn{1}{l}{$E_3^t=t\Xi e_3$} & \multicolumn{2}{l}{$E_4^t=t\Xi^2e_4$}\\
 
\hline
 $\cd 4{44}$ &$\to$ &  $\cd 4{41}$ & 
  \multicolumn{2}{r}{$\af=\frac 1{2t^2}(2t^3 + 13t^2 + 8t - 36)$} & $\bt= \frac 1{t^2}(4t + 9)\sqrt{\Xi} $ & $\gm=-\frac {3\sqrt{\Xi}}{2t}(t + 2)$\\
   \multicolumn{3}{l|}{$\Xi=t-1$} & \multicolumn{1}{l}{$E_1^t= \Xi(e_1 + \frac 1{\sqrt{\Xi}}e_2 + e_3)$} &\multicolumn{2}{l}{$E_2^t=i\Xi(e_1-\sqrt{\Xi}e_2-(\frac 32t + 2)e_3-\frac {3\Xi}{2t}(7t + 9)e_4)$} \\
 &&&\multicolumn{1}{l}{$E_3^t=\Xi(te_3+\frac \Xi t(7t + 9)e_4)$} & \multicolumn{2}{l}{$E_4^t=t\Xi^2e_4$}\\
 
 \hline
 $\cd 4{44}$ &$\to$ &  $\cd 4{42}$ & 
 & \multicolumn{1}{l}{$\af=\frac 1{4t}(4t^2 - 19t + 16)$} & $\bt= -\frac 2t\sqrt{\Xi} $ & $\gm=3\sqrt{\Xi}$\\
    \multicolumn{3}{l|}{$\Xi=t-1$} & \multicolumn{1}{l}{$E_1^t= \Xi(e_1 + \frac 1{\sqrt{\Xi}}e_2 -\frac 12e_3)$} &\multicolumn{2}{l}{$E_2^t=i\Xi(e_1-\sqrt{\Xi}e_2+(3t - \frac 12)e_3-6\Xi e_4)$} \\
 &&&\multicolumn{1}{l}{$E_3^t=\Xi(te_3-2\Xi e_4)$} & \multicolumn{2}{l}{$E_4^t=t\Xi^2e_4$}\\
 
 \hline
 $\cd 4{44}$ &$\to$ &  $\cd 4{43}(\A)$ & 
  &$\af=\frac{1}{4}+t-2 \A t^2 $ & $\bt= \A t \sqrt{1-t^2} $ & $\gamma=0  $\\
 \multicolumn{3}{l|}{$\Xi=t\sqrt{1-t^2}$}  & $E_1^t= \frac{\Xi^2}t\left(e_1+\frac {t^2}{\Xi}e_2-\frac{1}{2}e_3\right)$ & $E_2^t=\Xi e_2$
 & $E_3^t=\Xi^2 e_3 $ & $E_4^t= \frac{\Xi^4}t e_4$\\
 
 \hline
 $\cd 4{112}$ &$\to$ &  $\cd 4{44}(\A,\B,\C)$ & 
  $\lambda=\frac{\rho^2-t}{t^2}$ &$\af=\frac{i\Xi(t-\rho^2)}{\sqrt{\rho(\rho^2 + 1)}} $ & $\bt=\Xi^2(\rho\A + (\rho^2-1)\B)t  $ & $\gamma=\frac{i\Xi t}{\sqrt{\rho(\rho^2 + 1)}}$\\
    \multicolumn{3}{l|}{$\Xi=\frac 1{\sqrt{\rho^2\B(t + 2) + \rho\A(t + 1) - \B t}}$} & \multicolumn{2}{l}{$E_1^t= \frac{i\Xi t}{\sqrt{\rho(\rho^2 + 1)}}(t e_1 - (t + 1)e_2)-\frac{\Xi^2t^2}{\rho} e_3$} &\multicolumn{2}{l}{$E_2^t=-\frac{i\Xi t}{\rho\sqrt{\rho(\rho^2 + 1)}}(t e_1-(t-\rho^2)e_2)$} \\
 \multicolumn{3}{l|}{$\rho=-\frac {\A + \sqrt{\A^2 + 4\B(\B + \C)}}{2\B}$}&\multicolumn{2}{l}{$E_3^t=-\frac{\Xi^2 t^2}{\rho}\left(
e_3+\frac{\Xi^2 (\A+\rho\B)t^2}{\rho} e_4\right)$} & \multicolumn{2}{l}{$E_4^t=\frac{\Xi^4 t^4}{\rho^2} e_4$}\\
 
 \hline
 $\cd 4{44}$ &$\to$ &  $\cd 4{45}$ & 
  \multicolumn{2}{r}{$\af=-\Xi^2(t^3 - 2it^2 + 10it - 8i)$} & $\bt= i\Xi^{\frac 32}t^{-\frac 12}(3t - 4) $ & $\gm=0$\\
    \multicolumn{3}{l|}{$\Xi=t^{-1}(t-1)^{-1}$} & \multicolumn{2}{l}{$E_1^t= -t^{-1} e_1-\sqrt{\Xi t^{-1}}e_2+t\Xi e_3$} &\multicolumn{2}{l}{$E_2^t=i(-t^{-1} e_1+t^{-\frac 32}\Xi^{-\frac 12} e_2+t\Xi e_3)$} \\
 &&&\multicolumn{2}{l}{$E_3^t=\Xi (e_3-4i\Xi t^{-1}e_4)$} & \multicolumn{2}{l}{$E_4^t=\Xi^2e_4$}\\
 
  \hline
 $\cd 4{44}$ &$\to$ &  $\cd 4{46}(\A)$ & 
  & \multicolumn{1}{l}{$\af=-2i\A\Xi$} & $\bt= -i\A\sqrt{\Xi t^{-1}}$ & $\gm=0$\\
    \multicolumn{3}{l|}{$\Xi=t^{-1}(t-1)$} & \multicolumn{1}{l}{$E_1^t=\Xi e_1 + \sqrt{\Xi t^{-1}}e_2 $} &\multicolumn{1}{l}{$E_2^t=i\Xi(	
e_1-\sqrt {\Xi t} e_2)$}  &\multicolumn{1}{l}{$E_3^t=\Xi e_3$} & \multicolumn{1}{l}{$E_4^t=\Xi^2 e_4$}\\
 
 \hline
 $\cd 4{44}$ &$\to$ &  $\cd 4{47}( \A, \B)$ & 
  & $\af=-\A-\A^2+\B^2+t $  & $\bt= -\A\B $ & $\gamma=-\B   $\\
 &&& $E_1^t=  t (e_1+ \A e_3)$ &$E_2^t= t(e_2+\B e_3)$ & $E_3^t= t^2 (e_3 +\B^2 e_4)$ & $E_4^t= t^3 e_4$\\
 
\hline
 $\cd 4{44}$ &$\to$ &  $\cd 4{48}(\A)$ & 
 & \multicolumn{1}{l}{$\af=\A\Xi$} & $\bt= \A\sqrt{\Xi t^{-1}}$ & $\gm=-\A\sqrt{\Xi t^{-1}}$\\
    \multicolumn{3}{l|}{$\Xi=t^{-1}(t-1)$} & \multicolumn{1}{l}{$E_1^t=\Xi e_1 + \sqrt{\Xi t^{-1}}e_2 $} &\multicolumn{1}{l}{$E_2^t=i\Xi(	
e_1-\sqrt {\Xi t} e_2)$} &\multicolumn{1}{l}{$E_3^t=\Xi e_3$} & \multicolumn{1}{l}{$E_4^t=\Xi^2 e_4$}\\
 
 \hline
 $\cd 4{44}$ &$\to$ &  $\cd 4{49}(\A)$ & 
  \multicolumn{2}{r}{$\af=-\Xi^2(t^3-\A t^2 + 5\A t - 4\A)$} & $\bt= -\A(t - 2)\Xi^{\frac 32}t^{-\frac 12}$ & $\gm=-\A\Xi^{\frac 32}t^{\frac 12}$\\
    \multicolumn{3}{l|}{$\Xi=t^{-1}(t-1)^{-1}$} & \multicolumn{2}{l}{$E_1^t=-t^{-1}e_1 + \sqrt{\Xi t^{-1}}e_2+t\Xi e_3 $} &\multicolumn{2}{l}{$E_2^t=-i(t^{-1}e_1 +\Xi^{-\frac 12}t^{-\frac 32}e_2-t\Xi e_3)$} \\
 &&&\multicolumn{2}{l}{$E_3^t=\Xi e_3-2\A\Xi^2 t^{-1}e_4$} & \multicolumn{2}{l}{$E_4^t=\Xi^2 e_4$}\\
 
 \hline
 $\cd 4{44}$ &$\to$ &  $\cd 4{50}( \A)$ & 
   &$\af= \A t^{-2}$ & $\bt=0  $ & $\gamma=t^{-2}  $\\
 &&& $E_1^t=  t^{-1}  e_1$ &$E_2^t=t^{-1} e_2$ & $E_3^t= t^{-2} e_3 $ & $E_4^t=t^{-4}  e_4$\\
 
  \hline
 $\cd 4{44}$ &$\to$ &  $\cd 4{51}(\A)$ & 
  &\multicolumn{1}{l}{$\af=\A\Xi t^{-1}(t - 2)$} & $\bt= \A\Xi^{\frac 32}t^{-\frac 12}$ & $\gm=0$\\
    \multicolumn{3}{l|}{$\Xi=t^{-1}(t-1)$} & \multicolumn{1}{l}{$E_1^t=\Xi e_1 + \sqrt{\Xi t^{-1}}e_2 $} &\multicolumn{1}{l}{$E_2^t=i\Xi(	
e_1-\sqrt {\Xi t} e_2)$} &\multicolumn{1}{l}{$E_3^t=\Xi (e_3-\A \Xi t^{-1}e_4)$} & \multicolumn{1}{l}{$E_4^t=\Xi^2 e_4$}\\
 
 \hline
 $\cd 4{44}$ &$\to$ &  $\cd 4{52}$ & 
  &$\af= t^{-2}$ & $\bt=0  $ & $\gamma=0  $\\
 &&& $E_1^t=t^{-1}   e_1$ &$E_2^t=t^{-1} e_2$ & $E_3^t=t^{-2}  e_3 $ & $E_4^t=t^{-4}  e_4$\\
 
 \hline
 $\cd 4{44}$ &$\to$ &  $\cd 4{53}$ & 
  &$\af=-\frac{2+t}{t^2} $ & $\bt=-\frac{1}{2}-\frac{i}{t^2}  $ & $\gamma= 1-\frac{i}{t} $\\
 &&& $E_1^t=  t e_1+e_3$ &$E_2^t=t e_2+ie_3$ & $E_3^t=  t^2e_3-e_4 $ & $E_4^t=t^2  e_4$\\
 
 \hline
 $\cd 4{44}$ &$\to$ &  $\cd 4{54}(\A)$ & 
  \multicolumn{2}{r}{$\af=\frac{-\A(\A + 1)t^2 + 2(2\A + 1)^2(t - 1) + 4i\Xi}{t^2}$} &  \multicolumn{2}{l}{$\bt= \frac{(2\A + 1)(\A(t - 2) - 1)\Xi + it^2(t - 2)}{t^3}$ $\gm=\frac{(2\A + 1)\Xi}{t^2}$} \\
    \multicolumn{3}{l|}{$\Xi=t\sqrt{t-1}$} & \multicolumn{2}{l}{$E_1^t=-\frac 1t(\Xi e_1 - te_2 + \Xi\A e_3)$} &\multicolumn{2}{l}{$E_2^t=-\frac it(\Xi e_1+t(t-1)e_2-\Xi(\A + 1)e_3)$} \\
 &&&\multicolumn{2}{l}{$E_3^t=te_3+\frac\Xi {t^3}((2\A+1)^2\Xi + 2it^2)e_4$} & \multicolumn{2}{l}{$E_4^t=-\Xi e_4$}\\
 
 \hline
 $\cd 4{44}$ &$\to$ &  $\cd 4{55}( \A)$ & 
  &$\af= -\A (1+\A) $ & $\bt=t  $ & $\gamma= 0 $\\
 &&& $E_1^t=  t e_1+\A t e_3$ &$E_2^t=t e_2$ & $E_3^t= t^2 e_3 $ & $E_4^t=t^3  e_4$\\
 
 \hline
 $\cd 4{44}$ &$\to$ &  $\cd 4{56}$ & 
 & \multicolumn{1}{l}{$\af=0$} & $\bt= i\sqrt{\Xi t^{-1}}$ & $\gm=-2i\sqrt{\Xi t^{-1}}$\\
    \multicolumn{3}{l|}{$\Xi=t^{-1}(t-1)$} & \multicolumn{1}{l}{$E_1^t=\Xi e_1 + \sqrt{\Xi t^{-1}}e_2 $} &\multicolumn{1}{l}{$E_2^t=i\Xi(	
e_1-\sqrt {\Xi t} e_2)$}  &\multicolumn{1}{l}{$E_3^t=\Xi e_3$} & \multicolumn{1}{l}{$E_4^t=\Xi^2 e_4$}\\
 
 \hline
 $\cd 4{44}$ &$\to$ &  $\cd 4{57}(\A,\B)$ & 
&  \multicolumn{1}{l}{$\af=-2i(2\A + \B)\Xi t^{-1}$} & $\bt= i(\A (t - 2) - \B)\sqrt{\Xi t^{-3}}$ & $\gm=i\A\sqrt{\Xi t^{-1}}$\\
    \multicolumn{3}{l|}{$\Xi=t^{-1}(t-1)$} & \multicolumn{2}{l}{$E_1^t=\Xi e_1 + \sqrt{\Xi t^{-1}}e_2 $} &\multicolumn{2}{l}{$E_2^t=i\Xi(	
e_1-\sqrt {\Xi t} e_2)$} \\
 &&&\multicolumn{2}{l}{$E_3^t=\Xi (e_3-i(2\A + \B)\Xi t^{-1}e_4)$} & \multicolumn{2}{l}{$E_4^t=\Xi^2 e_4$}\\
 
  \hline
 $\cd 4{44}$ &$\to$ &  $\cd 4{58}$ & 
  &$\af=-\frac{2+t}{t^2} $ & $\bt=-\frac{i}{t^2}  $ & $\gamma= \frac{t-i}{t} $\\
 &&& $E_1^t=  t e_1+e_3$ &$E_2^t= te_2+ie_3$ & $E_3^t=t^2  e_3-e_4 $ & $E_4^t=t^2  e_4$\\

 \hline
 $\cd 4{44}$ &$\to$ &  $\cd 4{59}(\A, \B)$ & 
  &$\af= -\A-\A^2+\B^2 $ & $\bt= -\A\B $ & $\gamma=-\B  $\\
 &&& $E_1^t=  t (e_1+\A e_3)$ &$E_2^t= t(e_2+\B e_3)$ & $E_3^t=t^2  (e_3+\B^2 e_4) $ & $E_4^t=t^3  e_4$\\
 
  \hline
 $\cd 4{44}$ &$\to$ &  $\cd 4{60}$ & 
  \multicolumn{2}{r}{$\af=t^{-2}((1 - i)t^2 + 2(i - 4)t + 8)$} & $\bt= 2\Xi t^{-2}(t - 2)$ & $\gm=-2i\Xi t^{-1}$\\
    \multicolumn{3}{l|}{$\Xi=\sqrt{t-1}$} & 
    \multicolumn{1}{l}{$E_1^t=\Xi e_1 + e_2 + i\Xi e_3$} &\multicolumn{1}{l}{$E_2^t=	
\Xi(i e_1-i\Xi e_2+ e_3)$}
&\multicolumn{1}{l}{$E_3^t=te_3-4t^{-1}\Xi^2 e_4$} & \multicolumn{1}{l}{$E_4^t=t\Xi e_4$}\\
 
 \hline
 $\cd 4{44}$ &$\to$ &  $\cd 4{61}(\A)$ & 
&  \multicolumn{1}{l}{$\af=\A(\A + i)$} & $\bt= 0$ & $\gm=0$\\
    \multicolumn{3}{l|}{$\Xi=\sqrt{t-1}$} & \multicolumn{1}{l}{$E_1^t=\Xi e_1 + e_2 - i\A\Xi e_3$} &\multicolumn{1}{l}{$E_2^t=	
\Xi(ie_1-i\Xi e_2+\A e_3)$} 
&\multicolumn{1}{l}{$E_3^t=te_3$} & \multicolumn{1}{l}{$E_4^t=t\Xi e_4$}\\
 
 \hline
 $\cd 4{44}$ &$\to$ &  $\cd 4{62}$ & 
 & \multicolumn{1}{l}{$\af=-2t^{-2}(t^2 - 9t + 9)$} & $\bt=-3\Xi t^{-2}(t - 3)$ & $\gm=-3\Xi t^{-1}$\\
    \multicolumn{3}{l|}{$\Xi=\sqrt{t-1}$} & 
    \multicolumn{1}{l}{$E_1^t=\Xi e_1 + e_2 + \Xi e_3$} &\multicolumn{1}{l}{$E_2^t=i\Xi(	
e_1-\Xi e_2-2e_3)$}  &\multicolumn{1}{l}{$E_3^t=te_3+ 9\Xi^2 t^{-1}e_4$} & \multicolumn{1}{l}{$E_4^t=t\Xi e_4$}\\

\hline
 $\cd 4{44}$ &$\to$ &  $\cd 4{63}$ & 
&  \multicolumn{1}{l}{$\af=\frac 14$} & $\bt=0$ & $\gm=0$\\
    \multicolumn{3}{l|}{$\Xi=\sqrt{t-1}$} & 
    \multicolumn{1}{l}{$E_1^t=\Xi e_1 + e_2 -\frac 12 \Xi e_3$} &\multicolumn{1}{l}{$E_2^t=	i\Xi(e_1-\Xi e_2-\frac 12 e_3)$} &\multicolumn{1}{l}{$E_3^t=te_3$} 
    &\multicolumn{1}{l}{$E_4^t=t\Xi e_4$}\\
 
  \hline
 $\cd 4{44}$ &$\to$ &  $\cd 4{64}(\A)$ & 
  &$\af=-\A^2-\A $ & $\bt=0  $ & $\gamma=0  $\\
 &&& $E_1^t=t   (e_1+\A e_3)$ &$E_2^t=t e_2$ & $E_3^t=  t^2e_3 $ & $E_4^t=t^3  e_4$\\
 
 \hline
 $\cd 4{44}$ &$\to$ &  $\cd 4{65}(\A)$ & 
  \multicolumn{2}{r}{$\af=-\A((\A + 1)t - 2\A)t^{-2}\Xi^2$} & $\bt=-\A^2\Xi^3 t^{-2}$ & $\gm=-\A\Xi t^{-1}$\\
    \multicolumn{3}{l|}{$\Xi=\sqrt{t-1}$} & 
    \multicolumn{1}{l}{$E_1^t=\Xi e_1 + e_2 +\A \Xi e_3$} &\multicolumn{1}{l}{$E_2^t=	
i\Xi(e_1-\Xi e_2)$} 
&\multicolumn{1}{l}{$E_3^t=te_3+\A^2\Xi^2 t^{-1}e_4$} & \multicolumn{1}{l}{$E_4^t=t\Xi e_4$}\\
 
  \hline
 $\cd 4{44}$ &$\to$ &  $\cd 4{66}$ & 
  &$\af=t^{-2} $ & $\bt=t^{-1}  $ & $\gamma=0  $\\
 &&& $E_1^t=   e_1-e_3$ &$E_2^t= e_2+t^{-1}e_3$ & $E_3^t=  e_3 +t^{-2}e_4$ & $E_4^t=  t^{-1}e_4$\\
 
  \hline
 $\cd 4{44}$ &$\to$ &  $\cd 4{67}$ & 
 &$\af=0 $ & $\bt=0  $ & $\gamma=0  $\\
 &&& $E_1^t= t^{-1}  e_1$ &$E_2^t=t^{-1} e_2$ & $E_3^t=t^{-2}  e_3 $ & $E_4^t=t^{-4}  e_4$\\

 \hline
 $\cd 4{44}$ &$\to$ &  $\cd 4{68}$ & 
   &$\af=\frac{2-it+t^2}{t^2} $ & $\bt= -\frac{i}{t^2} $ & $\gamma=-\frac{1}{t}  $\\
 &&& $E_1^t=   e_1+i e_3$ &$E_2^t= te_2+e_3$ & $E_3^t=  t^2e_3+e_4 $ & $E_4^t= t^2 e_4$\\

 \hline
 $\cd 4{44}$ &$\to$ &  $\cd 4{69}$ & 
  &$\af=\frac{2-it}{t^2} $ & $\bt= -\frac{i}{t^2} $ & $\gamma=-\frac{1}{t}  $\\
 &&& $E_1^t=  t e_1+ie_3$ &$E_2^t= t e_2+e_3$ & $E_3^t= t^2 e_3+e_4 $ & $E_4^t=t^2  e_4$\\
 
  \hline
 $\cd 4{44}$ &$\to$ &  $\cd 4{70}$ & 
  &$\af=-\frac{1+t}{t^2} $ & $\bt= 0 $ & $\gamma=0  $\\
 &&& $E_1^t=te_1+e_3   $ &$E_2^t= te_2$ & $E_3^t= t^2 e_3 $ & $E_4^t=t^2  e_4$\\

 \hline
 $\cd 4{12}$ &$\to$ &  $\cd 4{71}$ & 
 $\lambda=-1+t $ &$\af= -t^2$ &  \\
 &&& $E_1^t=  t (e_1+i e_2)$ &$E_2^t= t(e_2+i e_3+3 te_4)$ & $E_3^t= t^2 (e_3 -3 i t e_4)$ & $E_4^t= (t-3) t^3 e_4$\\
 
  \hline
 $\cd 4{12}$ &$\to$ &  $\cd 4{72}$ & 
 $\lambda=-1 $ &$\af= t^2$ &  \\
 &&& $E_1^t=  t e_1 $ &$E_2^t= -3(e_2-3 e_3-36e_4)$ & $E_3^t= -3t (e_3 +9 e_4)$ & $E_4^t= 9 t^2 e_4$\\
 
   \hline
 $\cd 4{12}$ &$\to$ &  $\cd 4{73}$ & 
 $\lambda=-1 $ &$\af= \frac{t^2}{4}$ &  \\
 &&& $E_1^t=  t e_1-6e_2 $ &$E_2^t= t(e_2+9e_4)$ & $E_3^t= t^2 \left(e_3 +\frac{3t}{2}  e_4\right)$ & $E_4^t= -3 t^3 e_4$\\
 
   \hline
 $\cd 4{112}$ &$\to$ &  $\cd 4{74}(\A)$ & 
 $\lambda=\frac{1}{4} $ &$\af= -\frac{1+2 \A}{\Xi}$ &  $\bt=0$  &$\gamma = 0$  \\
 \multicolumn{3}{l|}{$\Xi=\sqrt{\A (1+\A)}$}& 
 $E_1^t=  t e_1-6e_2 $ &$E_2^t= t(e_2+9 e_4)$ & $E_3^t= t^2 \left(e_3 +\frac{3t}{2}  e_4\right)$ & $E_4^t= -3 t^3 e_4$\\
 
   \hline
 $\cd 4{112}$ &$\to$ &  $\cd 4{75}(\A)$ & 
 $\lambda=\frac{1}{4} $ &$\af= -\frac{i}{\Xi}$ &  $\bt=4$  &$\gamma =\frac{4i\A}{\Xi}$  \\
 \multicolumn{3}{l|}{$\Xi=\sqrt{\A (1+\A)}$}& 
 $E_1^t=  8 i \Xi t\left(2 e_1- e_2+\frac{2i\A}{\Xi} e_3\right) $ &
  $E_2^t= 16 t^2 (i \Xi  e_2+4\A e_3)$
  & $E_3^t= 128 \Xi^2 t^3(e_3 +8e_4)$ & $E_4^t=2048 \Xi^2 t^4 e_4$\\

   \hline
 $\cd 4{12}$ &$\to$ &  $\cd 4{76}$ & 
 $\lambda=-1 $ &$\af= -3t^2$ &  \\
 &&& $E_1^t=  \frac{2}{\Xi} e_1-\frac{1}{\Xi}e_2 +\frac{2}{1+A} e_3$ &$E_2^t=\frac{2 t}{\Xi} e_2$ & $E_3^t= -\frac{2 t}{\Xi^2} e_3 $ & $E_4^t= \frac{4t}{\Xi^4}  e_4$\\

  \hline
 $\cd 4{78}$ &$\to$ &  $\cd 4{77}$ 
       & $E_1^t=  t^{-1} e_1 $ &$E_2^t=t^{-1}  e_2$ & $E_3^t=  t^{-2}e_3 $ & $E_4^t= t^{-3}  e_4$\\
 
\hline
 $\cd 4{112}$ &$\to$ &  $\cd 4{78}$ & 
 $\lambda=\frac 14 $ &$\af= \frac i{\sqrt {2t}}(1-2t)$ &  $\bt=2$  &$\gamma =i\sqrt{\frac 2t}$  \\
&&& 
 \multicolumn{1}{l}{$E_1^t= -2i\sqrt{2t}(2e_1-e_2)$} & \multicolumn{2}{l}{$E_2^t= -2i\sqrt{2t}(2 e_1-(2t + 1)e_2) + 8te_3$}\\ 
&&&  \multicolumn{1}{l}{$E_3^t= -16 t^2 (e_3 +4 e_4)$} & \multicolumn{2}{l}{$E_4^t=128t^3 e_4$}\\
 
    \hline
 $\cd 4{112}$ &$\to$ &  $\cd 4{79}(\A)$ & 
 $\lambda=\frac{1+2 t^2}{4} $ &$\af= -\frac{1+t}{\Xi}$ &  $\bt=2$  &$\gamma =\frac{2(t-1)}{\Xi}$  \\
 \multicolumn{3}{l|}{$\Xi=\sqrt{2t(\A -t)}$}& 
 $E_1^t= \frac{2}{\Xi}(2e_1-e_2) $ &
  $E_2^t= \frac{4t}{\Xi} \left(e_2+\frac{2t}{\Xi}e_3\right)$
  & 
  $E_3^t= \frac{8t}{\Xi^2}(e_3 +4e_4)$ & $E_4^t=\frac{64t^2}{\Xi^4}e_4$\\
 
    \hline
 $\cd 4{112}$ &$\to$ &  $\cd 4{80}$ & 
 $\lambda=\frac{1}{4} $ &$\af= -\sqrt{2t}$ &  $\bt=2$  &$\gamma =0$  \\
 &&& 
 $E_1^t= \sqrt{\frac{2}{t}}(2e_1-e_2) $ &
  $E_2^t= 2\sqrt{2t}e_2 $
  & 
  $E_3^t= -4(e_3+4e_4)$ & $E_4^t=16e_4$\\

  \hline
 $\cd 4{112}$ &$\to$ &  $\cd 4{81}$ & 
 $\lambda=\frac{1}{4} $ &$\af=0 $ &  $\bt=2$  &$\gamma =0$  \\
 &&& 
 $E_1^t= \sqrt{\frac{2}{t}}(2e_1-e_2) $ &
  $E_2^t= 2\sqrt{2t}e_2 $
  & 
  $E_3^t= -4(e_3+4e_4)$ & $E_4^t=16e_4$\\

 \hline
 $\cd 4{102}$ &$\to$ &  $\cd 4{82}$ & 
 $\lambda=\frac{1}{4} $ &$\af=\sqrt{2}t $  \\
 &&& $E_1^t= -\sqrt{2}(2e_1-e_2)$ &$E_2^t= \sqrt{2} te_2$ & $E_3^t= 2t (e_3-2e_4) $ & $E_4^t= 4t^2 e_4$\\
 
 \hline
 $\cd 4{112}$ &$\to$ &  $\cd 4{83}(\A)$ & 
 $\lambda=\frac{1}{4} $ &$\af=\A^{-\frac 12}$ &  $\bt=0$  &$\gamma =0$  \\
 &&& 
 $E_1^t= 2\A^{-\frac 12} e_2 $ &
  $E_2^t= t^{-1}(e_1 - \frac 12 e_2) $
  & 
  $E_3^t= \A^{-\frac 12} t^{-1} e_3$ & $E_4^t=t^{-2}\A^{-1}e_4$\\
  
 \hline
 $\cd 4{83}$ &$\to$ &  $\cd 4{84}$ & 
   &$\af= t^{-2} $\\
 &&& $E_1^t=  t^{-1} e_1+t e_2$ &$E_2^t= e_2$ & $E_3^t=  t^{-1}e_3 $ & $E_4^t= t^{-2}  e_4$\\
  
 \hline
 $\cd 4{112}$ &$\to$ &  $\cd 4{85}$ & 
 $\lambda=\frac{1}{4} $ &$\af=0 $ & $\bt=0  $ & $\gamma=0  $\\
 &&& $E_1^t=  t^{-2}  e_1-\frac{1}{2t}e_2$ &$E_2^t= t^{-1}e_2$ & $E_3^t= -\frac{1}{2t^3} e_3 $ & $E_4^t= \frac{1}{4t^6} e_4$\\
 
\hline
 $\cd 4{12}$ &$\to$ &  $\cd 4{86}$ & 
 $\lambda=-1 $ &$\af=\frac{3t^3}{2}$ \\
 &&& $E_1^t= t  e_1+e_2$ &$E_2^t= 2e_2+3t e_4$ & $E_3^t= 2t e_3-3t^3e_4 $ & $E_4^t=-6t^2  e_4$\\
 
 \hline
 $\cd 4{112}$ &$\to$ &  $\cd 4{87}(\Lambda)$ & 
 $\lambda=\Lambda - \Psi t$ &$\af=\Xi((\Theta-\Psi)t^{-1} - 1) $ & $\bt=0$ & $\gamma=\frac\Xi{\Psi t}(\Theta-\Psi)$\\
    \multicolumn{3}{l|}{$\Xi=\Theta^{-\frac 12}$}& $E_1^t= \sqrt\Theta t^{-1} (e_1 - \Psi e_2)$ &$E_2^t= \Xi t^{-1}(e_1 - (t + \Psi) e_2)$ & $E_3^t= -t^{-2}(te_3-\Xi^2e_4)$ & $E_4^t= t^{-2}e_4$\\
 
 \hline
 $\cd 4{112}$ &$\to$ &  $\cd 4{88}(\Lambda)$ & 
 $\lambda=\Lambda - \Theta t$ &$\af=-\Psi^{-\frac 12}$ & $\bt=0$ & $\gamma=0$\\
    &&& $E_1^t= \sqrt\Psi t^{-1} (e_1 - \Theta e_2)$ &
    \multicolumn{2}{l}{$E_2^t= \Psi^{-\frac 12} t^{-1}(e_1 - (t + \Theta) e_2)$}\\&& & $E_3^t= -t^{-2}(te_3-\Psi^{-1}e_4)$ & $E_4^t= t^{-2}e_4$\\
    
 \hline
 $\cd 4{112}$ &$\to$ &  $\cd 4{89}(\Lambda)$ & 
 $\lambda=\Lambda - \Psi t$ &$\af=-\Theta^{-\frac 12}$ & $\bt=0$ & $\gamma=0$\\
    &&& $E_1^t= \sqrt\Theta t^{-1} (e_1 - \Psi e_2)$ &
    \multicolumn{2}{l}{$E_2^t= \Theta^{-\frac 12} t^{-1}(e_1 - (t + \Psi) e_2)$}\\&& & $E_3^t= -t^{-2}(te_3-\Theta^{-1}e_4)$ & $E_4^t= t^{-2}e_4$\\
    
 \hline
 $\cd 4{112}$ &$\to$ &  $\cd 4{90}(\Lambda)$ & 
 $\lambda=\Lambda - \Theta t$ &$\af=\Xi((\Psi-\Theta)t^{-1} - 1) $ & $\bt=0$ & $\gamma=\frac\Xi{\Theta t}(\Psi-\Theta)$\\
    \multicolumn{3}{l|}{$\Xi=\Psi^{-\frac 12}$}& $E_1^t= \sqrt\Psi t^{-1} (e_1 - \Theta e_2)$ &$E_2^t= \Xi t^{-1}(e_1 - (t + \Theta) e_2)$ & $E_3^t= -t^{-2}(te_3-\Xi^2e_4)$ & $E_4^t= t^{-2}e_4$\\
    
 \hline
 $\cd 4{112}$ &$\to$ &  $\cd 4{91}(\Lambda,\A)$ & 
 $\lambda=\Lambda - \Psi t $ &$\af=-\A\Xi(1 + \Psi t^{-1})  $ & $\bt=0$ & $\gamma=-\A\Xi t^{-1}$\\
   \multicolumn{3}{l|}{$\Xi=\Theta^{-\frac 12}$}& $E_1^t= \sqrt\Theta t^{-1} (e_1 - \Psi e_2)$ &$E_2^t= \Xi t^{-1}(e_1 - (t + \Psi) e_2)$ & $E_3^t= -t^{-2}(te_3-\Xi^2e_4)$ & $E_4^t= t^{-2}e_4$\\
 
 \hline
 $\cd 4{112}$ &$\to$ &  $\cd 4{92}(\Lambda,\A)$ & 
 $\lambda=\Lambda - \Theta t $ &$\af=-\A\Xi(1 + \Theta t^{-1})  $ & $\bt=0$ & $\gamma=-\A\Xi t^{-1}$\\
  \multicolumn{3}{l|}{$\Xi=\Psi^{-\frac 12}$}& $E_1^t= \sqrt\Psi t^{-1} (e_1 - \Theta e_2)$ &$E_2^t= \Xi t^{-1}(e_1 - (t + \Theta) e_2)$ & $E_3^t= -t^{-2}(te_3-\Xi^2e_4)$ & $E_4^t= t^{-2}e_4$\\
 
 \hline
 $\cd 4{112}$ &$\to$ &  $\cd 4{93}(\A)$ & 
 $\lambda=0 $ &$\af=-\A $ & $\bt=t^{-2}  $ & $\gamma=0  $\\
 &&& $E_1^t= t^{-1}e_1$ &$E_2^t= t^{-1}e_1 - e_2$ & $E_3^t= -t^{-1}e_3-t^{-3}e_4$ & $E_4^t= t^{-2}e_4$\\
 
 \hline
 $\cd 4{112}$ &$\to$ &  $\cd 4{94}(\A,\B)$ & 
 $\lambda=0 $ &$\af=-\A $ & $\bt=-t^{-2}((\B-1)t - \B)$ & $\gamma=-\A t^{-1}  $\\
 &&& $E_1^t= t^{-1}e_1$ &$E_2^t= t^{-1}e_1 - e_2$ & $E_3^t= -t^{-1}e_3-\B t^{-3}e_4$ & $E_4^t= t^{-2}e_4$\\
 
 \hline
 $\cd 4{112}$ &$\to$ &  $\cd 4{95}(\A)$ & 
 $\lambda=t^2 $ &$\af=\A $ & $\bt=t^{-1}  $ & $\gamma=0  $\\
 &&& $E_1^t=  -t^{-1}e_1$ &$E_2^t= -t^{-1}e_1 + e_2$ & $E_3^t= -t^{-1}e_3 $ & $E_4^t= t^{-2}e_4$\\
 
 \hline
 $\cd 4{112}$ &$\to$ &  $\cd 4{96}(\A)$ & 
 $\lambda=0 $ &$\af=t $ & $\bt=-t^{-2}((\A - 1)t - \A)  $ & $\gamma=t^{-1}  $\\
 &&& $E_1^t=  t^{-1}e_1$ &$E_2^t= t^{-1}e_1 - e_2$ & $E_3^t= -t^{-1}e_3 - \A t^{-3} e_4$ & $E_4^t= t^{-2}e_4$\\
 
 \hline
 $\cd 4{112}$ &$\to$ &  $\cd 4{97}(\Lambda)$ & 
 $\lambda=\Lambda - \Psi t$ &$\af=-\Xi \sqrt{\Theta t}$ &  $\bt=(1-\Theta\Psi^{-1})\Xi^2$  &$\gamma =0$  \\
 \multicolumn{3}{l|}{$\Xi=(t + \Psi-\Theta)^{-\frac 12}$}& 
 \multicolumn{1}{l}{$E_1^t=  \Xi \sqrt{\Theta t^{-1}} (e_1 -\Psi e_2) $} &
  \multicolumn{2}{l}{$E_2^t= \Xi (\Theta t)^{-\frac 12} (e_1 - (t + \Psi) e_2)$}  \\&&& 
   \multicolumn{1}{l}{$E_3^t= \Xi^2(-e_3 + \Xi^2\Psi^{-1} e_4)$} & $E_4^t=\Xi^2 e_4$\\
 
 \hline
 $\cd 4{112}$ &$\to$ &  $\cd 4{98}(\Lambda)$ & 
 $\lambda=\Lambda - \Theta t$ &$\af=-\Xi \sqrt{\Psi t}$ &  $\bt=(1-\Psi\Theta^{-1})\Xi^2$  &$\gamma =0$  \\
 \multicolumn{3}{l|}{$\Xi=(t + \Theta-\Psi)^{-\frac 12}$}& 
 \multicolumn{1}{l}{$E_1^t=  \Xi \sqrt{\Psi t^{-1}} (e_1 -\Theta e_2) $} &
  \multicolumn{2}{l}{$E_2^t= \Xi (\Psi t)^{-\frac 12} (e_1 - (t + \Theta) e_2)$}  \\&&& 
   \multicolumn{1}{l}{$E_3^t= \Xi^2(-e_3 + \Xi^2\Theta^{-1} e_4)$} & $E_4^t=\Xi^2 e_4$\\
   
 \hline
 $\cd 4{112}$ &$\to$ &  $\cd 4{99}(\A)$ & 
 $\lambda=\frac t{(t + 1)^2} $ &$\af= \frac{t - \A + 1}{\sqrt{t - 1}}$ &  $\bt=1 + \frac 1t$  &$\gamma =(1-\A)\frac\Xi t$  \\
 \multicolumn{3}{l|}{$\Xi=\frac{t+1}{\sqrt{t-1}}$}& 
 $E_1^t=  \Xi(-(1 + \frac 1t) e_1+e_2)$ & $E_2^t= \Xi(t + 1) (-\frac 1t e_1+ e_2)$ & $E_3^t= \Xi^2(-e_3 + \frac 1t e_4)$ & $E_4^t=\Xi^4 e_4$\\
 
 \hline
 $\cd 4{112}$ &$\to$ &  $\cd 4{100}(\A)$ & 
 $\lambda=-\frac t2 + \frac 14 $ &$\af=-\sqrt 2\A $ & $\bt=0$ & $\gamma=0$\\
 &&& $E_1^t=  \sqrt 2 t^{-1}(\frac 12 e_1-\frac 14 e_2)$ &
 \multicolumn{2}{l}{$E_2^t= \sqrt 2 t^{-1} (e_1 -(t + \frac 12)e_2)$}\\&& & $E_3^t= -t^{-1}e_3 + 2 t^{-2} e_4$ & $E_4^t= t^{-2}e_4$\\
 
 \hline
 $\cd 4{112}$ &$\to$ &  $\cd 4{101}(\A,\B)$ & 
 $\lambda=\Xi^{-2} t$ &$\af= -\B\sqrt t$ &  $\bt=\Xi t^{-1}$  &$\gamma =(\A - \B) \frac\Xi {\sqrt t}$  \\
 \multicolumn{3}{l|}{$\Xi=t+1$}& 
 $E_1^t=  \Xi t^{-\frac 12}(-\Xi e_1+t e_2)$ & $E_2^t= -\Xi^2 t^{-\frac 12}e_1$ & $E_3^t= \Xi^2 (e_3 + \Xi^2 t^{-1} e_4)$ & $E_4^t=\Xi^4 e_4$\\
 
  \hline
 $\cd 4{112}$ &$\to$ &  $\cd 4{102}(\Lambda,\A)$ & 
 $\lambda=\Lambda + \Psi\Theta^{-1}t $ &$\af=\A\Lambda\Theta^{\frac 12}\Xi t^{-\frac 12}$ &  $\bt=(\Theta -2\Lambda)\Xi^2\Psi^{-1}$  &$\gamma =\A\Theta^{\frac 32}\Xi t^{-\frac 12}$  \\
 \multicolumn{3}{l|}{$\Xi=(\Theta -2\Lambda + t)^{-\frac 12}$}& 
 \multicolumn{1}{l}{$E_1^t=  \Theta^{\frac 12}\Xi t^{-\frac 12}(\Theta e_1 -\Lambda e_2) $} &
  \multicolumn{2}{l}{$E_2^t= \Theta^{-\frac 12}\Xi t^{-\frac 12} (\Theta e_1 + (t-\Lambda) e_2)$} \\
  &&& 
   \multicolumn{1}{l}{$E_3^t= \Theta\Xi^2 (e_3 +\Theta\Psi^{-1}\Xi^2 e_4)$} & $E_4^t=\Theta^2\Xi^2 e_4$\\
   
 \hline
 $\cd 4{102}$ &$\to$ &  $\cd 4{103}$ & 
 $\lambda=0 $ &$\af=0 $ &   &  \\
 &&& $E_1^t= - e_1+te_3$ &$E_2^t= -e_2$ & $E_3^t=  e_3 $ & $E_4^t=  e_4$\\

 \hline
 $\cd 4{112}$ &$\to$ &  $\cd 4{104}$ & 
 $\lambda= 0$ &$\af= t^{-1}$ & $\bt=t^{-2}  $ & $\gamma=t^{-1}  $\\
 &&& $E_1^t= t^{-1}  e_1$ &$E_2^t=t^{-1} e_2$ & $E_3^t= t^{-2} e_3 $ & $E_4^t= t^{-4}  e_4$\\

 \hline
 $\cd 4{112}$ &$\to$ &  $\cd 4{105}(\Lambda, \A, \B)$ & 
 $\lambda=\Lambda $ &$\af=t+t^{-1} $ & $\bt=\B+\A t^{-2}  $ & $\gamma=\B t^{-1}  $\\
 &&& $E_1^t= t^{-1}  e_1-e_3$ &$E_2^t= t^{-1}e_2$ & $E_3^t=t^{-2}  e_3 $ & $E_4^t=t^{-4}  e_4$\\

\hline
 $\cd 4{112}$ &$\to$ &  $\cd 4{106}(\A)$ & 
 $\lambda=0 $ &$\af=t^{-1} $ & $\bt= \A t^{-2}  $ & $\gamma=0  $\\
 &&& $E_1^t=t^{-1}   e_1$ &$E_2^t= t^{-1}e_2$ & $E_3^t=  t^{-2}e_3$ & $E_4^t=t^{-4}  e_4$\\

 \hline
 $\cd 4{112}$ &$\to$ &  $\cd 4{107}(\Lambda)$ & 
 $\lambda= \Lambda$ &$\af=\frac{1+\sqrt{1-4\Lambda}}{2t} $ & $\bt=0  $ & $\gamma= t^{-1} $\\
 &&& $E_1^t=t^{-1}   e_1$ &$E_2^t=t^{-1} e_2$ & $E_3^t=t^{-2}  e_3$ & $E_4^t=t^{-4}  e_4$\\

 \hline
 $\cd 4{112}$ &$\to$ &  $\cd 4{108}(\Lambda)$ & 
 $\lambda=\Lambda $ &$\af=\frac{1-\sqrt{1-4\Lambda}}{2t} $ & $\bt=0  $ & $\gamma= t^{-1} $\\
 &&& $E_1^t=t^{-1}   e_1$ &$E_2^t=t^{-1} e_2$ & $E_3^t=t^{-2}  e_3$ & $E_4^t=t^{-4}  e_4$\\
 \hline

 \hline
 $\cd 4{112}$ &$\to$ &  $\cd 4{109}(\Lambda, \A)$ & 
 $\lambda=\Lambda$ &$\af=0$ & $\bt=t^{-2}$ & $\gamma= \A t^{-1}$\\
 &&& $E_1^t=t^{-1} e_1$ &$E_2^t=t^{-1} e_2$ & $E_3^t=t^{-2} e_3$ & $E_4^t=t^{-4} e_4$\\
 \hline

 \hline
 $\cd 4{112}$ &$\to$ &  $\cd 4{110}(\Lambda)$ & 
 $\lambda=\Lambda$ &$\af=0$ & $\bt=0$ & $\gamma=t^{-1}$\\
 &&& $E_1^t=t^{-1} e_1$ &$E_2^t=t^{-1} e_2$ & $E_3^t=t^{-2} e_3$ & $E_4^t=t^{-4} e_4$\\
 \hline

 \hline
 $\cd 4{112}$ &$\to$ &  $\cd 4{111}(\Lambda)$ & 
 $\lambda=\Lambda$ &$\af=0$ & $\bt=\Lambda^{-1}$ & $\gamma=0$\\
 &&& $E_1^t=t^{-1} e_1$ &$E_2^t=t^{-1} e_2$ & $E_3^t=t^{-2} e_3+t^{-2}  e_4$ & $E_4^t=t^{-4} e_4$\\
 \hline 
 
\end{longtable}
}

\end{proof}

\end{document}